\newlist{steps}{enumerate}{1}
\setlist[steps, 1]{label = Step \arabic*:}
\DeclareRobustCommand\widecheck[1]{{\mathpalette\@widecheck{#1}}}
\def\@widecheck#1#2{%
   \setbox\z@\hbox{\m@th$#1#2$}%
   \setbox\tw@\hbox{\m@th$#1%
      \widehat{%
         \vrule\@width\z@\@height\ht\z@
         \vrule\@height\z@\@width\wd\z@}$}%
   \dp\tw@-\ht\z@
   \@tempdima\ht\z@ \advance\@tempdima2\ht\tw@ \divide\@tempdima\thr@@
   \setbox\tw@\hbox{%
      \raise\@tempdima\hbox{\scalebox{1}[-1]{\lower\@tempdima\box\tw@}}}%
   {\ooalign{\box\tw@ \cr \box\z@}}}
\theoremstyle{plain}
\newtheorem{thm}{Theorem}[section]
\crefname{thm}{Theorem}{Theorems}
\Crefname{thm}{Theorem}{Theorems}
\newtheorem{prop}[thm]{Proposition}
\crefname{prop}{Proposition}{Propositions}
\Crefname{prop}{Proposition}{Propositions}
\newtheorem{lem}[thm]{Lemma}
\crefname{lem}{Lemma}{Lemmas}
\Crefname{lem}{Lemma}{Lemmas}
\newtheorem{cor}[thm]{Corollary}
\crefname{cor}{Corollary}{Corollaries}
\Crefname{cor}{Corollary}{Corollaries}
\crefname{claim}{Claim}{Claims}
\Crefname{claim}{Claim}{Claims}
\crefname{property}{Property}{Properties}
\Crefname{property}{Property}{Properties}
\crefname{problem}{Problem}{Problems}
\Crefname{problem}{Problem}{Problems}
\newtheorem{ques}[thm]{Question}
\crefname{ques}{Question}{Questions}
\Crefname{ques}{Question}{Questions}
\theoremstyle{definition}
\newtheorem{defn}[thm]{Definition}
\crefname{defn}{Definition}{Definitions}
\Crefname{defn}{Definition}{Definitions}
\crefname{conj}{Conjecture}{Conjectures}
\Crefname{conj}{Conjecture}{Conjectures}
\crefname{notation}{Notation}{Notations}
\Crefname{notation}{Notation}{Notations}
\crefname{convention}{Convention}{Conventions}
\Crefname{convention}{Convention}{Conventions}
\crefname{cond}{Condition}{Conditions}
\Crefname{cond}{Condition}{Conditions}
\crefname{assum}{Assumption}{Assumptions}
\Crefname{assum}{Assumption}{Assumptions}
\crefname{conj}{Conjecture}{Conjectures}
\Crefname{conj}{Conjecture}{Conjectures}
\theoremstyle{remark}
\newtheorem{rem}[thm]{Remark}
\crefname{rem}{Remark}{Remarks}
\Crefname{rem}{Remark}{Remarks}
\crefname{ex}{Example}{Examples}
\Crefname{ex}{Example}{Examples}
\crefname{section}{Section}{Sections}
\Crefname{section}{Section}{Sections}
\crefname{subsection}{Section}{Sections}
\Crefname{subsection}{Section}{Sections}
\crefname{figure}{Figure}{Figures}
\Crefname{figure}{Figure}{Figures}
\newcommand{\Z}{\mathbb{Z}}
\newcommand{\sign}{\mathrm{Sign}}
\newcommand{\s}{\mathfrak{s}}
\newcommand{\R}{\mathbb R}
\newcommand{\ctext}[1]{\raise0.2ex\hbox{\textcircled{\scriptsize{#1}}}}
\def\dim{\operatorname{dim}}
\newcommand{\mbar}[1]{{\ooalign{\hfil#1\hfil\crcr\raise.167ex\hbox{--}}}}
\title{The pretzel knot $P(4, -3, 5)$ is not squeezed}
\author{Nobuo Iida}
\address{Department of Information and Mathematical Sciences,
Division of Mathematical Sciences, Tokyo Women's Christian University,
 2-6-1 Zempukuji, Suginami-ku, Tokyo 167-8585, Japan
}
\email{n-iida8v@cis.twcu.ac.jp}
\author{Tatsumasa Suzuki}
\address{Department of Frontier Media Science, Meiji University, 4-21-1 Nakano, Nakano-ku, Tokyo 164-8525, Japan}
\email{suzukit519@meiji.ac.jp}
\subjclass{57K10; 57K18; 57K41}
\keywords{squeezedness of knot, slice-torus invariant, $\Z_2$-equivariant monopole Floer theory, Heegaard Floer theory, 3-strand pretzel knot, almost simple linear graph}
\date{\today}
\begin{document}

\begin{abstract}
We prove that an infinite family of three-strand pretzel knots is not squeezed.  
In particular, we show that $P(4, -3, 5)$ is not squeezed.  
This answers a question posed by Lewark (2024).  
Our proof is obtained by comparing the Rasmussen invariant with the $q_M$-invariant introduced by Taniguchi and the first author.  
\end{abstract}

\maketitle
\tableofcontents

\section{Introduction}
Knot homology has become a central topic in knot theory and provides many interesting knot invariants.  
In this paper, we study a geometric property of knots called \emph{squeezedness} for certain three-strand pretzel knots, by comparing two invariants that belong to the class of \emph{slice-torus invariants} arising from two different knot homology theories.

\subsection{Pretzel knots}
\label{subsec:pretzel}
In this section, we will review some basic materials on pretzel knots.
Let $p$, $q$, and $r$ be integers and let $P(p,q,r)$ be the three-strand pretzel link.  
For any integers $p_1, p_2, p_3$ and any permutation $\rho \in S_3$, where $S_3$ is the symmetric group on three letters, the pretzel links $P(p_{\rho(1)}, p_{\rho(2)}, p_{\rho(3)})$ and $P(p_1, p_2, p_3)$ are isotopic (see~\cite{RSuzuki2010khovanov}*{Proposition~1.2\,(ii)}). 
Moreover, for any integers $p, q$, and $r$, the reverse of the mirror $-P(p,q,r)$ are isotopic to $P(-p,-q,-r)$, where $-L$ denote the orientation reverse of the mirror of a link $L$, respectively.  

A necessary and sufficient condition for $P(p,q,r)$ to be a knot is that 
\begin{align*}
  &\text{(ODD 0)} && p, q, \text{ and } r \text{ are all odd; or}\\
  &\text{(EVEN 0)} && \text{exactly one of } p, q, \text{ or } r \text{ is even.}
\end{align*}

\subsection{Squeezedness of knots and the main theorem}
Squeezedness is a geometric property of knots in $S^3$ introduced by Feller--Lewark--Lobb \cite{Feller-Lewark-Lobb2024Squeezed}. 
Let us explain its definition.
For coprime positive integers $p$ and $q$, $T_{p,q}$ is called a \textit{positive torus knot}, and $T_{p,-q}$ is called a \textit{negative torus knot}. 
Notice that the concordance inverse of the negative torus knot $-T_{p, -q}$ is isotopic to $T_{p, q}$.
For a pair of knots $K_0, K_1\subset S^3$, a surface cobordism $\Sigma: K_0 \to K_1$ is defined to be a properly embedded oriented connected compact surface $\Sigma \subset [0, 1]\times S^3$ such that $\partial \Sigma=\{1\}\times K_1\amalg -\{0\}\times K_0$.
A knot $K$ in $S^3$ is defined to be squeezed when there is a genus minimizing surface cobordism  $\Sigma: T^- \to T^+$ \footnote{We do not assume $T^-=-T^+$. Say, $T^-=T_{p', -q'}$, $T^+=T_{p, q}$.} from a negative torus knot $T^-$ to a positive torus knot $T^+$ and an embedding $K \hookrightarrow \Sigma$ such that $\Sigma$ is the composition of two surface cobordisms $T^-\to K$ and $K \to T^+$.
It is shown in \cite{Feller-Lewark-Lobb2024Squeezed} that quasipositive knots and alternating knots are squeezed.
Lewark \cite{Lewark2024Quasipositivity} determined the squeezedness of a certain class of 3-strand pretzel knots and posed the following question: 
\begin{center}
Is $P(4, -3, 5)$ squeezed? \cite{Lewark2024Quasipositivity}
\end{center}
In this paper, we answer this question and moreover prove non-squeezedness of an infinite family of 3-strand pretzel knots including $P(4, -3, 5)$:

\begin{thm}
\label{thm:main}
For $b>0$ and $b+1 \le a \le 2b$, $P(2b+2, -(2b+1), 2a+1)$ is not squeezed. In particular, $P(4, -3, 5)$ is not squeezed.
\end{thm}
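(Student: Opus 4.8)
The plan is to apply a criterion of Feller--Lewark--Lobb~\cite{Feller-Lewark-Lobb2024Squeezed}: all slice-torus invariants take the same value on a squeezed knot, where a slice-torus invariant is normalized to equal $2g_4$ on positive torus knots. This holds because, if $K$ is squeezed via $\Sigma=\Sigma_2\circ\Sigma_1$ with $\Sigma_1\colon T^-\to K$ and $\Sigma_2\colon K\to T^+$ and $\Sigma$ genus minimizing, then genus is additive under composition of connected cobordisms and the minimal genus of a cobordism $T^-\to T^+$ equals $g_4(T^-)+g_4(T^+)$, so for any slice-torus invariant $\nu$ the cobordism bounds $\nu(K)-\nu(T^-)\le 2g(\Sigma_1)$ and $\nu(T^+)-\nu(K)\le 2g(\Sigma_2)$ add up to the equality $\nu(T^+)-\nu(T^-)=2g_4(T^+)+2g_4(T^-)=2g(\Sigma)$ (here $\nu(T^-)=-2g_4(T^-)$ because $-T^-$ is a positive torus knot); hence both bounds are sharp and $\nu(K)=\nu(T^-)+2g(\Sigma_1)$ depends only on $K$. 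Therefore, to prove \Cref{thm:main} it suffices to exhibit, for each $K=P(2b+2,-(2b+1),2a+1)$ with $b>0$ and $b+1\le a\le 2b$, two slice-torus invariants that disagree on $K$; taking $(a,b)=(2,1)$ then gives $P(4,-3,5)$ and answers Lewark's question~\cite{Lewark2024Quasipositivity}.

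The two invariants will be the Rasmussen invariant $s$, from Khovanov homology, and the Iida--Taniguchi invariant $q_M$, from $\Z_2$-equivariant monopole Floer theory; both are slice-torus invariants. First I would compute $s$ on the whole family: the pretzel knots $P(2b+2,-(2b+1),2a+1)$ have diagrams whose associated graph is an \emph{almost simple linear graph}, and for such diagrams the Khovanov, hence Lee, homology is tractable enough to read off $s(K)$ as an explicit function of $a$ and $b$. I would cross-check the result against the signature, against the Seifert genus of three-strand pretzel knots, and against the value $s(P(4,-3,5))$.

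The crux is the computation of $q_M$ on the family. For this I would pass to the double branched cover $Y=\Sigma_2(K)$ with its covering involution $\iota$. For a three-strand pretzel knot, $Y$ is a Seifert fibered rational homology sphere over $S^2$ with three exceptional fibers recording the data $2b+2$, $-(2b+1)$, $2a+1$, and it bounds a plumbing along an (almost) linear graph. On such spaces the $\Z_2$-equivariant monopole Floer data underlying $q_M$ can be made explicit -- via the plumbing/lattice-homology description of the Floer groups and the correction terms of $(Y,\iota)$ at the $\spinc$ structure naturally associated to the branched cover -- producing a closed formula for $q_M(K)$ in terms of $a$ and $b$.

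Finally I would compare the two formulas and check that $q_M(K)\neq s(K)$ for all $(a,b)$ in the stated range, so that, by the first paragraph, $K$ is not squeezed. The hard part will be the $q_M$ computation: assembling the $\Z_2$-equivariant monopole Floer package for this family of Seifert fibered spaces and extracting $q_M$ in closed form. By comparison, the Khovanov-side input and the concluding numerical comparison should be routine.
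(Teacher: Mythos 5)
Your reduction of the theorem to exhibiting two slice-torus invariants that disagree on $K=P(2b+2,-(2b+1),2a+1)$ is exactly the paper's strategy (the contrapositive of Feller--Lewark--Lobb's lemma), and your choice of the pair $s$ and $q_M$ is also the paper's. The Khovanov-side input is fine in substance, since $s$ of these pretzel knots is already known from R.~Suzuki and Lewark; note, though, that the ``almost simple linear graph'' is the plumbing graph of the double branched cover and plays no role in computing Khovanov or Lee homology.

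The genuine gap is at the step you yourself flag as the crux: the computation of $q_M$. You propose to ``assemble the $\Z_2$-equivariant monopole Floer package'' for these Seifert fibered double branched covers via a plumbing/lattice-homology description and equivariant correction terms, and to extract $q_M$ in closed form. No such computational machinery exists for the $\Z_2$-equivariant Seiberg--Witten Floer spectrum underlying $q_M$, and the paper does not attempt it. Instead it uses an indirect criterion of Iida--Taniguchi: if $\Sigma_2(K)$ is an $L$-space with respect to the unique $\Z_2$-invariant $\mathrm{spin}^c$ structure $\mathfrak{s}_0$, then $q_M(K)=-\sigma(K)/2$. The actual work is then (a) verifying the $L$-space condition via N\'emethi's graded root theory, by showing the $\tau$-sequence of the negative definite star-shaped plumbing is non-decreasing --- a sequence of floor-function estimates in which the hypothesis $b+1\le a\le 2b$ is used essentially (your plan gives no indication of where this constraint would enter, which is a warning sign); and (b) evaluating $-\sigma/2=a-b$ via Jabuka's signature formula, versus $s/2=a-b-1$. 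Without the $L$-space criterion or some substitute for it, your outline does not close, because the bound $|q_M+\sigma/2|\le 1$ from the defining paper leaves three possible values of $q_M$ and a direct equivariant Floer computation is not available.
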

\par
This result is proved by comparing the Rasmussen invariant from Khovanov and Lee homology and the Iida--Taniguchi $q_M$-invariant \cite{Iida-Taniguchi2024Monopoles} from $\Z_2$-equivariant Seiberg--Witten monople Floer homology of double branched covers. 
\par 
We also give new computations of the $q_M$-invariant:
\begin{thm}[see Section \ref{sec:qM}]
\label{thm:qM(P(p,q,r))}
\noindent
\begin{enumerate}
\item[$(0)$] If $p=0$, then
\[
q_M(P(0, q, r)) =
\begin{cases}
\displaystyle \frac{|q+r-2|}{2}, & qr>0, \\[2mm]
\displaystyle \frac{|q-r|}{2}, & qr<0.
\end{cases}
\]
\item[$(1)$] If $p>0$, $q>0$, and $r$ are all odd, then
\[
q_M(P(p, q, r)) =
\begin{cases}
0, & \min\{p, q\} \le -r, \\[1mm]
-1, & \min\{p, q\} > -r.
\end{cases}
\]

\item[$(2)$] If $p>0$ is even and $q$ and $r$ are odd with $q \le r$, then
\[
q_M(P(p, q, r)) =
\begin{cases}
\displaystyle\frac{q+r}{2}-1, & q>0, r>0, \\[2mm]
\displaystyle\frac{q+r}{2}, & q<0, r>0, \ q+r \le 0, \\[2mm]
\displaystyle\frac{q+r}{2}, & q<0, r>0, \ q+r>0, \ p+q=1, 2q+r\le-1,\\[2mm]
\displaystyle\frac{q+r}{2}, & q<0, r>0, \ q+r>0, \ p+q<0, \\[2mm]
\displaystyle\frac{q+r}{2}, & q<0, r<0.
\end{cases}
\]
\end{enumerate}
\end{thm}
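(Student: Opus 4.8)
The plan is to work through the double branched cover. Write $\Sigma_2(K)$ for the double cover of $S^3$ branched along $K$, with its covering involution $\tau$; by \cite{Iida-Taniguchi2024Monopoles} the invariant $q_M(K)$ is extracted from the $\Z_2$-equivariant Seiberg--Witten Floer homotopy type of $(\Sigma_2(K),\tau)$, and it is a slice-torus invariant --- additive under connected sum, changing by at most the genus under a surface cobordism (in the normalization with $q_M(T_{p,q})=g_4(T_{p,q})$ for positive torus knots, hence $-g_4$ for negative ones), and vanishing on slice knots. The uniform scheme for all three parts is: (i) write down an explicit plumbing/surgery description of $\Sigma_2(P(p,q,r))$ together with $\tau$; (ii) extract a lower bound for $q_M$ from the equivariant Floer invariant of that manifold; (iii) produce a matching upper bound by exhibiting a surface cobordism of the correct genus from $P(p,q,r)$ to a knot whose $q_M$ is already known --- an unknot, a $(2,n)$-torus knot, or a quasipositive/quasinegative knot --- realized by a controlled sequence of crossing changes and band moves on the pretzel diagram; and (iv) conclude from (ii) and (iii).

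For part (0), $P(0,q,r)$ is a connected sum of two $(2,n)$-torus knots, so $\Sigma_2(P(0,q,r))$ is a connected sum of two lens spaces; additivity of $q_M$ together with its values on the torus-knot summands (equivalently, the equivariant Floer invariants of lens spaces) yields the formula after the elementary case split on the signs of $q$ and $r$. For parts (1) and (2), $\Sigma_2(P(p,q,r))$ is a small Seifert-fibered rational homology sphere, and the whole computation is organized around the \emph{almost simple linear graphs} of the keywords: the ordinary Heegaard/monopole Floer data of such a manifold is read off from the graph by standard lattice-cohomology and plumbing calculus, and the covering involution acts on it in a manner dictated by the Seifert geometry, so the $\Z_2$-equivariant invariant --- hence the lower bound for $q_M$ --- becomes combinatorially explicit. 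The five displayed regimes of part (2) (and the two of part (1)) correspond exactly to the walls in that combinatorics, namely $q+r=0$, $p+q\in\{0,1\}$, and $2q+r=-1$, along which the invariant jumps; in particular the family $P(2b+2,-(2b+1),2a+1)$ of \Cref{thm:main} lands in the subcase $q+r>0$, $p+q=1$, $2q+r\le -1$, for which $q_M=(q+r)/2=a-b$.

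The main obstacle is the equivariant lower bound and, above all, matching it to an upper bound. The ordinary Floer computation for these Seifert-fibered spaces is routine; the real work is to pin down the action of $\tau$ on the Floer homotopy type, evaluate the resulting $\Z_2$-equivariant invariant, and then --- the most laborious step --- construct, separately in each regime of part (2), an explicit surface in $[0,1]\times S^3$ realizing exactly the genus needed for the slice-torus cobordism inequality to force $q_M$ down to the lower bound. Verifying that the two bounds coincide across all five regimes of part (2) (and both regimes of part (1)), with no gap, is where essentially all of the case-by-case bookkeeping resides.
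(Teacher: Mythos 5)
Your part (0) matches the paper: $P(0,q,r)=T_{2,q}\#T_{2,r}$ plus additivity and the torus/mirror properties, and that part is fine. For parts (1) and (2), however, your plan has a genuine gap: the two steps that carry all the content --- (ii) ``pin down the action of $\tau$ on the Floer homotopy type and evaluate the resulting $\Z_2$-equivariant invariant'' and (iii) ``construct, separately in each regime, an explicit surface realizing exactly the genus needed'' --- are only named, never performed. You yourself flag them as ``the main obstacle'' and ``the most laborious step,'' but a proof has to actually produce the equivariant computation and the cobordisms, and it is not at all clear that either is feasible as stated: computing the $\Z_2$-equivariant Seiberg--Witten Floer homotopy type of a Seifert-fibered space with its covering involution is not ``combinatorially explicit'' from the plumbing graph by any standard calculus, and there is no reason given that genus-minimizing cobordisms to unknots or torus knots exist with exactly the genus your lower bound would demand in each of the five regimes. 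The assertion that the case walls $q+r=0$, $p+q\in\{0,1\}$, $2q+r=-1$ ``correspond exactly to the walls in that combinatorics'' is likewise unsubstantiated.

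The paper avoids both obstacles with two devices you do not use. First, most regimes of (1) and (2) are handled by squeezedness: all-odd pretzel knots, and even ones with $q+r\le 0$ or $qr>0$, are squeezed by Feller--Lewark--Lobb, so \Cref{thm:Feller-Lewark-Lobb2024Squeezed} gives $q_M=s/2$ and one simply quotes the known Rasmussen-invariant computations (\Cref{thm:s(P(p,q,r))}) --- no Floer computation and no cobordism construction at all. Second, for the remaining regimes ((EVEN~Y) and the $p+q=1$, $2q+r\le -1$ subfamily of (EVEN~X)), the paper invokes \Cref{thm: L with s0}: if $\dim_{\mathbb{F}_2}\widehat{HF}(\Sigma_2(K),\mathfrak{s}_0)=1$ then $q_M(K)=-\sigma(K)/2$. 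This replaces the entire equivariant analysis \emph{and} the upper-bound cobordism by a purely non-equivariant rank-one check, which is done via N\'emethi's $\tau$-sequences (\Cref{prop:tauL}, \Cref{lem: P(2b+2,-(2b+1),2a+1)}) and combined with Jabuka's signature formula (\Cref{cor: sign of P(p, q, r)}). If you want to salvage your outline, the missing idea is precisely this L-space criterion: without it, or without an actual execution of your steps (ii) and (iii), the argument does not close.
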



\section*{Acknowledgement}
We would like to thank Taketo Sano for contributing to his knowledge of the Rasmussen $s$-invariant. 
The first author thank Tetsuya Ito for a discussion. 
We would like to thank Hisaaki Endo for his helpful comments and his encouragement. 
We would like to thank Irving Dai and Masaki Taniguchi for a discussion on the first draft.

\section{Slice-torus invariants}
Various knot homology theories have produced a variety of knot invariants.
Some of these invariants share certain key properties and are collectively known as slice-torus invariants. 
\begin{defn}
A function
\[
f: \{\text{knots in } S^3\}/\text{concordance}\to \R
\]
is\footnote{Notice that concordance invariance implies isotopy invariance.} defined to be a \textit{slice-torus invariant} if it satisfies the following properties:
\begin{enumerate}
\item 
For any knot $K$ in $S^3$, we have $f(K) \le g_4(K)$, where $g_4(K)$ denotes the $4$-ball genus of $K$ (slice property). 

\item 
For any coprime integers $p, q > 0$, 
\[
f(T_{p,q}) = \frac{(p-1)(q-1)}{2}=g(\text{Milnor fiber})
\]
for the positive torus knot $T_{p,q}$ (torus property).

\item 
For any pair of knots $K$ and $K'$ in $S^3$, 
\[
f(K \# K') = f(K) + f(K')
\]
holds (additivity).
\end{enumerate}
\end{defn}
\begin{rem}
\begin{enumerate}
\item 
Since the connected sum $K \# -K$ is slice for any knot $K$ in $S^3$, and since a slice-torus invariant $f$ is, by definition, a concordance invariant, we have
\[
f(K) + f(-K) = f(K \# -K) = 0.
\]
Therefore,
\[
f(-K) = -f(K) \le g_4(-K) = g_4(K).
\]
Hence, for any knot $K$ in $S^3$,
\[
|f(K)| \le g_4(K).
\]
\item 
Existence of a slice-torus invariant 
gives the Milnor conjecture
\[
\frac{(p-1)(q-1)}{2}= g_4(T_{p, q}).
\]
for any coprime integers $p, q>0$.
\item 
For any surface cobordism $\Sigma: K_0 \to K_1$, 
a slice-torus invariant $f$ satisfies 
\[
|f(K_1)-f(K_0)|\leq g(\Sigma), 
\]
where $g(\Sigma)$ is the genus of the surface $\Sigma$. 
This is called the \textit{cobordism inequality} (see \cite{Lewark2014rasmussen}*{Proposition 5.1}). 
\item Neither $g_4$ nor $-\sigma/2$ is slice-torus invariant. Indeeed, $g_4$ is not additive (Consider connected sum of two copies of figure eight knots, for example). It is well-known that the knot signature is not enough to prove the Milnor conjecture.
\item 
By Section \ref{subsec:pretzel}, 
any permutation of the parameters does not change the isotopy class, that is,
\[
P(p_{\rho(1)}, p_{\rho(2)}, p_{\rho(3)}) \text{ is isotopic to } P(p_1, p_2, p_3) \quad (\rho \in S_3),
\]
and the mirror-reverse satisfies
\[
P(-p, -q, -r) \text{ is isotopic to } -P(p,q,r).
\]
It follows that
\[
f(P(p_{\rho(1)}, p_{\rho(2)}, p_{\rho(3)})) = f(P(p_1, p_2, p_3)),
\]
\[
f(P(-p, -q, -r)) = -f(P(p, q, r)).
\]
Therefore, it is sufficient to compute the slice-torus invariant $f(P(p,q,r))$ for the pretzel knot $P(p,q,r)$ with $(p,q,r)$ satisfying the following conditions:
\begin{align*}
\text{(ODD)} 
&\ \text{$p$, $q$, and $r$ are all odd with $p, q > 0$; or} \\
\text{(EVEN)} 
&\ \text{$p$ is even, and $q$ and $r$ are odd with $p \ge 0$ and $q \le r$.}
\end{align*}
\end{enumerate}
\end{rem}

Examples of slice-torus invariants include the following:
\begin{itemize}
\item 
the Ozsv\'ath--Szab\'o $\tau$-invariant from Heegaard knot Floer homology \cite{Ozsvath-Szabo2003knot};
\item
half of the Rasmussen invariant, $s/2$, from Khovanov and Lee homology \cite{Rasmussen2010khovanov};
\item
the $\mathfrak{sl}_N$ versions of the Rasmussen invariant (see \cite{Luwark-Lobb2016Quantum} and references therein);
\item
$\tau_M$, $\tau_I$, $\tau^\#_M$, and $\tau^\#_I$ from monopole and instanton Floer theory (see \cite{Ghosh-Zhenkun-Wong2024Tau} and references therein);
\item 
$\tilde{s}$ from equivariant singular instanton Floer theory \cite{Daemi-Imori-Sato-Scaduto-Taniguchi2022Instantons};
\item 
$q_M$ from $\mathbb{Z}_2$-equivariant monopole Floer theory for double branched covers \cite{Iida-Taniguchi2024Monopoles}.
\end{itemize}

Feller--Lewark--Lobb proved the following result relating squeezedness of knots and slice-torus invariants:
\begin{thm}[Feller--Lewark--Lobb \cite{Feller-Lewark-Lobb2024Squeezed}*{Lemma 3.5}]
\label{thm:Feller-Lewark-Lobb2024Squeezed}
Let $K$ in $S^3$ be a squeezed knot.
Then for any pair of slice-torus invariants $f$ and $f'$, we have
\[
f(K)=f'(K).
\]
\end{thm}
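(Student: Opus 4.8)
The plan is to derive everything from the cobordism inequality together with the torus and additivity axioms, exploiting the fact that for \emph{every} slice-torus invariant the value on a torus knot is axiomatically prescribed. Write $T^{-}=T_{p',-q'}$ and $T^{+}=T_{p,q}$ for the negative and positive torus knots appearing in the definition of squeezedness of $K$, and let $\Sigma\colon T^{-}\to T^{+}$ be the genus-minimizing cobordism, decomposed as $\Sigma=\Sigma^{+}\circ\Sigma^{-}$ with $\Sigma^{-}\colon T^{-}\to K$ and $\Sigma^{+}\colon K\to T^{+}$. Since $\Sigma$ is obtained by gluing $\Sigma^{-}$ and $\Sigma^{+}$ along the circle $K$, additivity of the Euler characteristic gives $g(\Sigma)=g(\Sigma^{-})+g(\Sigma^{+})$.

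The first step is to identify $g(\Sigma)$ with $D:=\tfrac{(p-1)(q-1)}{2}+\tfrac{(p'-1)(q'-1)}{2}$. For the upper bound: by the Milnor conjecture $g_4(T^{-})=g_4(T_{p',q'})=\tfrac{(p'-1)(q'-1)}{2}$, so puncturing a minimal-genus surface bounded by $T^{-}$ in $B^4$ produces a connected genus-$\tfrac{(p'-1)(q'-1)}{2}$ cobordism from $T^{-}$ to the unknot; composing it with the reverse of the analogous genus-$\tfrac{(p-1)(q-1)}{2}$ cobordism from $T^{+}$ to the unknot yields a cobordism $T^{-}\to T^{+}$ of genus $D$, whence $g(\Sigma)\le D$ since $\Sigma$ is genus-minimizing. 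For the lower bound: apply the cobordism inequality to $\Sigma$ with any fixed slice-torus invariant, say $f$ itself; the torus property and the relation $f(-L)=-f(L)$ from the Remark give $f(T^{+})=\tfrac{(p-1)(q-1)}{2}$ and $f(T^{-})=-f(T_{p',q'})=-\tfrac{(p'-1)(q'-1)}{2}$, hence $g(\Sigma)\ge f(T^{+})-f(T^{-})=D$. Therefore $g(\Sigma)=D$.

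The second step is a sandwich argument. For any slice-torus invariant $f$, the cobordism inequality applied to $\Sigma^{-}$ and $\Sigma^{+}$ gives $f(K)-f(T^{-})\le g(\Sigma^{-})$ and $f(T^{+})-f(K)\le g(\Sigma^{+})$; summing and using $g(\Sigma^{-})+g(\Sigma^{+})=g(\Sigma)=D=f(T^{+})-f(T^{-})$ forces both inequalities to be equalities. In particular $f(K)=f(T^{-})+g(\Sigma^{-})=g(\Sigma^{-})-\tfrac{(p'-1)(q'-1)}{2}$. The right-hand side depends only on the topological cobordism $\Sigma^{-}$ and on $(p',q')$, not on the choice of slice-torus invariant, so repeating the computation with a second slice-torus invariant $f'$ yields $f'(K)=g(\Sigma^{-})-\tfrac{(p'-1)(q'-1)}{2}=f(K)$, as claimed.

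The only step demanding real care is the identification $g(\Sigma)=D$ in the first step: the lower bound is immediate, but the matching upper bound requires the explicit genus-$D$ cobordism from $T^{-}$ to $T^{+}$ factoring through the unknot, together with reading ``genus-minimizing'' as minimizing over \emph{all} cobordisms $T^{-}\to T^{+}$. Everything else is axiom-chasing — the content of the statement is precisely that squeezedness traps $f(K)$ between the two axiomatically determined quantities $f(T^{\pm})$ whose difference equals the minimal cobordism genus, leaving no freedom for $f(K)$ to depend on $f$.
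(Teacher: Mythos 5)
Your proposal is correct and follows essentially the same route as the paper: decompose $\Sigma=\Sigma^{+}\circ\Sigma^{-}$, show $g(\Sigma)=f(T^{+})-f(T^{-})$ by combining the cobordism inequality (lower bound) with genus-minimality against the connected-sum/unknot-factoring cobordism (upper bound), and then force equality in the two cobordism inequalities to pin down $f(K)=f(T^{-})+g(\Sigma^{-})$, a quantity independent of $f$. The only cosmetic difference is that you spell out the upper-bound cobordism through the unknot and phrase the torus values via the Milnor fiber genus rather than $g_4$, which the paper leaves to a remark.
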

\begin{proof}
For the sake of completeness, we reproduce the proof.

Suppose $\Sigma: T^-\to T^+$ is a genus minimizing surface cobordism from a negative torus knot to a positive torus knot and it is the composition
of $\Sigma_-: T^-\to K$ and $\Sigma_+: K\to T^+$.
\footnote{We give the following remark, thought it is not necessary for the proof of the claim. Now $\Sigma_-$ and 
$\Sigma_+$ are genus minimizing as well. 
This implies
\begin{equation}
g(\Sigma_-)=g_4(K\# -T^-),\text{ and } g(\Sigma_+)=g_4(T_+\#-K).
\end{equation}
Indeed, from a connected surface in $D^4$ bounded by $K\amalg -T^-$, we can obtain a surface with the same genus in  $D^4$ bounded by $K\# -T^-$ by cutting the surface along an arc between a point in $K$ and a point in $-T^-$. On the other hand, from a connected surface in $D^4$ bounded by $K\# -T^-$, we can obtain a surface with the same genus in $D^4$ bounded by $K\amalg -T^-$ by attaching a pants cobordism. 
The argument for $\Sigma_+$ is similar.}
\par
The cobordism iniequality gives
\begin{equation}
f(K)-f(T^-)\leq g(\Sigma_-),\text{ and }f(T^+)-f(K)\leq  g(\Sigma_+)
\end{equation}
However, the equalities hold because
\[
g(\Sigma_-)+
g(\Sigma_+)=g(\Sigma)
\]
and 
\[
g(\Sigma)\geq f(T^+)-f(T^-)=g_4(T^+)+g_4(-T^-)\geq g(\Sigma)
\]
where the first inequality is the sum of the cobordism inequalities and the second inequality follows from  the assumption that $\Sigma$ is genus minimizing and thus its genus is not greater than that of the connected sum of a genus minimizing surface bounded by $T^+$ and that of $-T^-$.
Thus, we have
\[
f(K)=f(T^+)+g(\Sigma_+)=g_4(T^+)+g(\Sigma_+)
\]
which is independent of the choice of the slice-torus invariant.
\end{proof}

\section[The slice-torus invariant $q_M$]{The slice-torus invariant \texorpdfstring{$q_M$}{q\_M}}

We describe the slice-torus invariant $q_M$, introduced by Taniguchi and the first author, which arises from the $\mathbb{Z}_2$-equivariant Seiberg--Witten Floer cohomology \cite{Iida-Taniguchi2024Monopoles}.
Let $K$ be a knot in $S^3$.
The $\mathbb{Z}_2$-equivariant Seiberg--Witten Floer cohomology of the double branched cover $\Sigma_2(K)$, equipped with the unique $\mathbb{Z}_2$-invariant $\mathrm{Spin}^c$ structure $\mathfrak{s}_0$, is defined as
\[
\widetilde{H}^*_{\mathbb{Z}_2}(SWF(K))
:=
\widetilde{H}^{*+2n(\Sigma_2(K), \mathfrak{s}_0, g)}_{\mathbb{Z}_2}
\bigl(SWF(\Sigma_2(K), \mathfrak{s}_0, g); \mathbb{F}_2\bigr).
\]
This is an $\mathbb{F}_2[Q] = H^*(B\mathbb{Z}_2; \mathbb{F}_2)$-module, and it has rank one by \cite{Iida-Taniguchi2024Monopoles}*{Theorem~1.16}.

We define
\[
\mathrm{gr}_{\min,\mathrm{free}}(K)
:=
\min\{\, i \mid x \in \widetilde{H}^i_{\mathbb{Z}_2}(SWF(K)), \,
Q^n x \neq 0 \text{ for all } n \ge 0 \,\}.
\]
Then the invariant $q_M$ is given by
\[
q_M(K)
:=
-\,\mathrm{gr}_{\min,\mathrm{free}}(K)
-\frac{3}{4}\sigma(K).
\]
It is shown in \cite{Iida-Taniguchi2024Monopoles} that $q_M$ is an $\mathbb{Z}$-valued slice-torus invariant.

\par\vspace{1em}
Taniguchi and the first author proved the following:

\begin{thm}[Iida--Taniguchi {\cite{Iida-Taniguchi2024Monopoles}*{Theorem~4.5}}]
\label{thm: L with s0}
Let $K$ be a knot in $S^3$ such that
\[
\dim_{\mathbb{F}_2}\widehat{HF}(\Sigma_2(K), \mathfrak{s}_0) = 1,
\]
that is, $\Sigma_2(K)$ is an $L$-space with respect to $\mathfrak{s}_0$.
Then
\[
q_M(K) = -\frac{\sigma(K)}{2}.
\]
\qed
\end{thm}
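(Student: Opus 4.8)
The plan is to reduce the statement to a grading computation. Since $q_M(K)=-\mathrm{gr}_{\min,\mathrm{free}}(K)-\tfrac{3}{4}\sigma(K)$ by definition, the desired identity $q_M(K)=-\sigma(K)/2$ is equivalent to $\mathrm{gr}_{\min,\mathrm{free}}(K)=-\sigma(K)/4$, and that is what I would establish. First I would translate the hypothesis through the identification of Heegaard Floer homology with monopole Floer homology: $\dim_{\mathbb{F}_2}\widehat{HF}(\Sigma_2(K),\mathfrak{s}_0)=1$ says precisely that $(\Sigma_2(K),\mathfrak{s}_0)$ is a Seiberg--Witten $L$-space, so that for a generic $\mathbb{Z}_2$-invariant metric $g$ and a small perturbation the only critical point of the perturbed Chern--Simons--Dirac functional is the reducible. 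In Manolescu-type finite-dimensional approximation this forces the Conley index to be the Thom space of the negative part of the Hessian at the reducible; since moreover the deck involution $\tau$ of $\Sigma_2(K)\to S^3$ is a $g$-isometry preserving $\mathfrak{s}_0$ and fixing the reducible, the approximation is naturally a $\mathbb{Z}_2$-space and $SWF(\Sigma_2(K),\mathfrak{s}_0,g)$ is $\mathbb{Z}_2$-stably equivalent to a representation sphere $S^{V}$, where $V$ is a sum of trivial and sign representations of $\mathbb{Z}_2$ (the complex Dirac directions splitting into these after the $S^1$-reduction built into the definition of $SWF(K)$).

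Next I would compute the Borel cohomology. For $V=\mathbb{R}^{a}\oplus\widetilde{\mathbb{R}}^{\,b}$ the $\mathbb{Z}_2$-Borel Thom isomorphism shows that $\widetilde{H}^{*}_{\mathbb{Z}_2}(S^V;\mathbb{F}_2)$ is \emph{free} of rank one over $\mathbb{F}_2[Q]$ with bottom degree $\dim V=a+b$: each trivial summand shifts the bottom degree up by one, and each sign summand also shifts it up by one while multiplying the generator by the Euler class $Q$, which over $\mathbb{F}_2[Q]$ leaves the module free of rank one. Hence $\widetilde{H}^{*}_{\mathbb{Z}_2}(SWF(K))$ is free of rank one (in agreement with \cite{Iida-Taniguchi2024Monopoles}*{Theorem~1.16}), and $\mathrm{gr}_{\min,\mathrm{free}}(K)$ is exactly its bottom degree; taking into account the normalizing shift by $2n(\Sigma_2(K),\mathfrak{s}_0,g)$ in the definition of $\widetilde{H}^{*}_{\mathbb{Z}_2}(SWF(K))$, this reads
\[
\mathrm{gr}_{\min,\mathrm{free}}(K)=\dim V-2\,n(\Sigma_2(K),\mathfrak{s}_0,g).
\]

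It remains to evaluate the right-hand side, which I would do by filling $\Sigma_2(K)$ with the double branched cover $W$ of $B^4$ along a Seifert surface for $K$ pushed into the interior: $W$ is a spin$^c$ four-manifold with $b_1(W)=0$ and $\sigma(W)=\sigma(K)$. Applying the Atiyah--Patodi--Singer index theorem to the spin$^c$ Dirac operator and to the signature operator on $W$, together with the $\mathbb{Z}_2$-equivariant ($G$-signature) refinement for the branched cover, expresses both $\dim V$ and $n(\Sigma_2(K),\mathfrak{s}_0,g)$ in terms of $\sigma(W)$ and interior curvature integrals; adding the two contributions, the curvature terms cancel and one is left with $\mathrm{gr}_{\min,\mathrm{free}}(K)=-\sigma(K)/4$, whence $q_M(K)=\sigma(K)/4-\tfrac{3}{4}\sigma(K)=-\sigma(K)/2$.

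The hard part is this last step: carrying out the equivariant index bookkeeping with the correct normalizations — pinning down exactly how $V$ splits into trivial and sign summands, matching that against the $2n$-shift, and controlling the $\eta$-invariant contributions of the branch circle — so that the answer comes out to exactly $-\sigma(K)/4$ rather than that plus an undetermined additive constant. A useful consistency check along the way is to test the formula on knots with lens-space double branched covers, such as two-bridge knots, where $\mathrm{gr}_{\min,\mathrm{free}}$ can be computed directly, and on small torus knots such as $T_{3,5}$, where $q_M$ is forced by the torus property.
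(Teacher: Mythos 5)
This theorem is not actually proved in the paper: it is imported from \cite{Iida-Taniguchi2024Monopoles}*{Theorem~4.5} with an immediate \textup{q.e.d.}, the only added content being the remark that the proof there uses the $L$-space condition only at $\mathfrak{s}_0$. Your argument therefore has to stand on its own, and its load-bearing first step does not. From $\dim_{\mathbb{F}_2}\widehat{HF}(\Sigma_2(K),\mathfrak{s}_0)=1$ you conclude that the reducible is the only critical point of the (perturbed) Chern--Simons--Dirac functional, and hence that $SWF(\Sigma_2(K),\mathfrak{s}_0,g)$ is $\mathbb{Z}_2$-stably a representation sphere $S^V$. But the $L$-space condition is a statement about Floer \emph{homology}, not about the critical set: irreducible solutions may well be present and merely cancel in homology, and nothing guarantees that an invariant metric and a small equivariant perturbation can remove them (the Weitzenb\"{o}ck argument requires positive scalar curvature, which many $L$-spaces arising as $\Sigma_2(K)$ --- e.g.\ hyperbolic double branched covers of alternating knots --- do not admit). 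Everything downstream of this claim, namely the identification of the Conley index with a Thom space, the splitting $V=\mathbb{R}^a\oplus\widetilde{\mathbb{R}}^{\,b}$, and the formula $\mathrm{gr}_{\min,\mathrm{free}}(K)=\dim V-2n(\Sigma_2(K),\mathfrak{s}_0,g)$, therefore rests on an unjustified homotopy-theoretic assertion. What the rank-one hypothesis actually controls is the \emph{non-equivariant} cohomology of the Floer spectrum (via the identification of $\widehat{HF}$ with monopole Floer homology), and a correct argument must pass from that to the $\mathbb{Z}_2$-Borel cohomology by a comparison of the equivariant and non-equivariant theories (Gysin sequence, localization at the fixed-point set of the deck involution), not by a determination of the equivariant stable homotopy type.

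There is a second gap even if one grants the representation-sphere model: the step you yourself call ``the hard part'' --- the equivariant APS/$G$-signature bookkeeping showing that the bottom degree comes out to exactly $-\sigma(K)/4$ --- is where the actual content of the theorem lies, and it is asserted rather than carried out. The geometric input you propose (the double cover $W$ of $B^4$ branched along a pushed-in Seifert surface, with $b_1(W)=0$ and $\sigma(W)=\sigma(K)$) is indeed the standard and correct source of the $\sigma(K)$ term, but ``the curvature terms cancel and one is left with $-\sigma(K)/4$'' is the conclusion, not an argument; an undetermined additive constant is precisely what such a sketch cannot exclude. On the positive side, your opening reduction of the statement to $\mathrm{gr}_{\min,\mathrm{free}}(K)=-\sigma(K)/4$ is correct, and your proposed consistency checks on two-bridge knots and small torus knots are sensible; but as written the proposal does not constitute a proof.
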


Although this result is stated in \cite{Iida-Taniguchi2024Monopoles} under the stronger assumption that $\Sigma_2(K)$ is an $L$-space (that is, the condition holds for all $\mathrm{Spin}^c$ structures), 
the same conclusion remains valid under the weaker assumption above, without any change in the proof. 

\section[Computations of $q_M(P(p,q,r))$]{Computations of \texorpdfstring{$q_M(P(p,q,r))$}{q\_M(P(p,q,r))}}
\label{sec:qM}

\subsection[Computation of the knot signature for $P(p,q,r)$]{Computation of the knot signature for \texorpdfstring{$P(p,q,r)$}{P(p,q,r)}}

Let $p$, $q$, and $r$ be integers.  
The knot signature $\sigma(P(p,q,r))$ of the $3$-strand pretzel knot $P(p,q,r)$ was computed by Jabuka~\cite{Jabuka2010Rational}. 
Define
\[
\sign(m)
=
\begin{cases}
1, & m > 0, \\[1mm]
0, & m = 0, \\[1mm]
-1, & m < 0,
\end{cases}
\]
for any integer $m$. 

\begin{prop}[Jabuka~{\cite{Jabuka2010Rational}*{Theorem~1.18}}]
\label{prop:Jabuka-signature}
\noindent
\begin{enumerate}
\item[$(1)$] If $p$, $q$, and $r$ are all odd, then 
\[
\sigma(P(p,q,r)) = \sign(p + q) + \sign\big((p + q)(pq + qr + rp)\big). 
\]
\item[$(2)$] If $p \neq 0$ is even and $q$ and $r$ are odd, then 
\begin{align*}
\sigma(P(p,q,r))
&= -\sign(q)(|q| - 1) - \sign(r)(|r| - 1) \\
&\quad - \sign(qr(q + r)) + \sign\big((q + r)(pq + qr + rp)\big). 
\end{align*}
\end{enumerate}
\end{prop}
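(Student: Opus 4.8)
The plan is to compute $\sigma(P(p,q,r))$ from an explicit spanning surface via the Gordon--Litherland formula $\sigma(K)=\sigma(\mathcal{G}_F)-\tfrac12 e(F)$, valid for any (possibly non-orientable) spanning surface $F$ of $K$, where $\mathcal{G}_F$ is the Gordon--Litherland pairing on $H_1(F;\Z)$ and $e(F)$ is the normal Euler number of $F$. The natural surface here is the pretzel surface $F$: two disks joined by three twisted bands carrying $p$, $q$, and $r$ half-twists. One has $H_1(F;\Z)\cong\Z^2$, generated by the loops through consecutive pairs of bands, and a direct linking computation on the standard diagram gives, in this basis,
\[
\mathcal{G}_F=\begin{pmatrix} p+q & -q\\ -q & q+r\end{pmatrix},
\qquad \det\mathcal{G}_F=pq+qr+rp .
\]
By the reduction recorded in the remark of Section~2 it suffices to treat the normalized parameters, and throughout I would exclude the degenerate locus $pq+qr+rp=0$, which never occurs for a knot since $|pq+qr+rp|$ is the (odd, nonzero) knot determinant.

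First I would dispose of case~(1), where $p,q,r$ are all odd. Then every band has an odd number of half-twists, $F$ is orientable---hence a genuine Seifert surface with $e(F)=0$---and $\sigma(P(p,q,r))=\sigma(\mathcal{G}_F)$ is simply the signature of the symmetric $2\times2$ matrix above. The signature of $\begin{pmatrix}a&b\\ b&c\end{pmatrix}$ equals $2\sign(a+c)$ when its determinant is positive and $0$ when it is negative; moreover $\det\mathcal{G}_F>0$ forces $p+q$ and $q+r$ to share a sign, so $\sign(\tr\mathcal{G}_F)=\sign(p+q)$. A one-line split on the sign of $pq+qr+rp$ then turns $\sigma(\mathcal{G}_F)$ into exactly $\sign(p+q)+\sign\!\big((p+q)(pq+qr+rp)\big)$, since the second summand equals $+\sign(p+q)$ when $\det\mathcal{G}_F>0$ and $-\sign(p+q)$ when $\det\mathcal{G}_F<0$.

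The substance is case~(2), with the $p$-band even and the $q$- and $r$-bands odd. Now $F$ is non-orientable, so the correction $-\tfrac12 e(F)$ is active; equivalently, and more transparently for bookkeeping, I would pass to an orientable Seifert surface obtained by resolving the two odd twist regions into plumbings of Hopf bands (the even $p$-band produces no long chain and survives only in the core). This realizes the symmetrized Seifert form as a block tridiagonal matrix: two $A$-type chains of lengths $|q|-1$ and $|r|-1$, which are definite lattices of signature $\pm(|q|-1)$ and $\pm(|r|-1)$, plumbed onto a small residual core that reproduces $\mathcal{G}_F$. Row/column reduction peels off the two chains, contributing $-\sign(q)(|q|-1)-\sign(r)(|r|-1)$, and leaves the core, whose signature---by the same determinant/trace analysis as in case~(1)---supplies the remaining $-\sign(qr(q+r))+\sign\!\big((q+r)(pq+qr+rp)\big)$.

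The hard part is precisely this second case: keeping the signs of the chain contributions and of the core block mutually consistent across all patterns of $q$, $r$, and $q+r$, since both the orientation of each $A$-chain and the sign of the residual determinant flip with these parameters. The clean route is to fix the normalization $p\ge 0$ even, $q\le r$, diagonalize once, and match the four constituent sign functions $\sign(q)(|q|-1)$, $\sign(r)(|r|-1)$, $\sign(qr(q+r))$, and $\sign\!\big((q+r)(pq+qr+rp)\big)$ term by term; I would finally pin down all sign conventions against a small example such as $P(4,-3,5)$.
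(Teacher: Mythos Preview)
The paper does not give its own proof of this proposition; it is simply quoted from Jabuka~\cite{Jabuka2010Rational}*{Theorem~1.18} and used as input. So there is nothing in the paper to compare your argument against.

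On its merits: your treatment of case~(1) is clean and correct. The pretzel surface is orientable, the Gordon--Litherland form is the displayed $2\times 2$ matrix, and your determinant/trace analysis reproduces the formula exactly.

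In case~(2), however, there is a genuine inconsistency in the sketch. You say the row/column reduction leaves ``a small residual core that reproduces $\mathcal{G}_F$'', and then claim its signature is $-\sign(qr(q+r))+\sign\!\big((q+r)(pq+qr+rp)\big)$. But if the core really were $\mathcal{G}_F$, then by your own case~(1) analysis its signature would be $\sign(p+q)+\sign\!\big((p+q)(pq+qr+rp)\big)$, which is a different expression. The point is that peeling off the two $A$-chains by congruence does \emph{not} leave $\mathcal{G}_F$ intact: the Schur-complement step modifies the entries of the residual $2\times 2$ block, and it is this modified block---not $\mathcal{G}_F$---whose trace and determinant you must analyze. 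You need to write down that residual block explicitly (its determinant will still be $\pm(pq+qr+rp)$ up to a positive scalar, but its diagonal entries will involve $q+r$ rather than $p+q$), and only then does the $-\sign(qr(q+r))$ term emerge. Your closing paragraph acknowledges the sign bookkeeping is the hard part, but the specific claim about the core is where the outline currently breaks.
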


\begin{rem}
If $q$ and $r$ are odd, then $P(0, q, r)$ is isotopic to $T_{2,q} \# T_{2,r}$. Hence, 
\[
\sigma(P(0,q,r)) = \sigma(T_{2, q}) + \sigma(T_{2, r}) = -\sign(q)(|q| - 1) - \sign(r)(|r| - 1).
\]
\end{rem}

\begin{cor}
\label{cor: sign of P(p, q, r)}
Let $p$ be a positive even integer, $q$ a negative odd integer, and $r$ a positive odd integer satisfying $r \neq -q$.
Then
\[
\sigma(P(p, q, r)) =
\begin{cases}
-(q + r), & \displaystyle \frac{1}{p} + \frac{1}{q} + \frac{1}{r} > 0, \\[10pt]
-(q + r) + 2, & \displaystyle \frac{1}{p} + \frac{1}{q} + \frac{1}{r} < 0.
\end{cases}
\]
\end{cor}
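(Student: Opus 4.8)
The plan is to derive the formula directly from Jabuka's signature computation in \Cref{prop:Jabuka-signature}(2) by a careful sign analysis, since under the stated hypotheses ($p>0$ even, $q<0$ odd, $r>0$ odd, $r\neq -q$) all the relevant quantities have determined signs. First I would substitute $\sign(q)=-1$, $\sign(r)=1$, $|q|=-q$, $|r|=r$ into the formula of \Cref{prop:Jabuka-signature}(2); the first two terms then collapse to
\[
-\sign(q)(|q|-1)-\sign(r)(|r|-1) = (-q-1)+(1-r) = -(q+r).
\]
For the third term, one uses $qr<0$ together with $q+r\neq 0$ (guaranteed by $r\neq -q$) to get $\sign\big(qr(q+r)\big)=-\sign(q+r)$, so that $-\sign\big(qr(q+r)\big)=\sign(q+r)$.

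For the fourth term I would rewrite $pq+qr+rp = pqr\big(\tfrac1p+\tfrac1q+\tfrac1r\big)$ and use $pqr<0$, which gives $\sign(pq+qr+rp) = -\sign\big(\tfrac1p+\tfrac1q+\tfrac1r\big)$ and hence
\[
\sign\big((q+r)(pq+qr+rp)\big) = -\sign(q+r)\,\sign\Big(\tfrac1p+\tfrac1q+\tfrac1r\Big).
\]
Assembling the four contributions yields
\[
\sigma(P(p,q,r)) = -(q+r) + \sign(q+r)\Big(1-\sign\big(\tfrac1p+\tfrac1q+\tfrac1r\big)\Big),
\]
where we may assume $\tfrac1p+\tfrac1q+\tfrac1r\neq 0$ since only the two sign cases occur in the statement. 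When $\tfrac1p+\tfrac1q+\tfrac1r>0$ the correction factor vanishes and we obtain $\sigma(P(p,q,r))=-(q+r)$; when $\tfrac1p+\tfrac1q+\tfrac1r<0$ the factor equals $2$ and we obtain $\sigma(P(p,q,r))=-(q+r)+2\sign(q+r)$.

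The only step requiring genuine thought is the claim that in the second case $q+r>0$, so that $2\sign(q+r)=2$ and the formula matches the statement. Here I would argue by contradiction: if $q+r<0$, then $r<-q=|q|$, so $\tfrac1q+\tfrac1r=\tfrac1r-\tfrac1{|q|}>0$ and therefore $\tfrac1p+\tfrac1q+\tfrac1r>\tfrac1p>0$, contradicting the case hypothesis; since also $q+r\neq 0$ by assumption, we conclude $q+r>0$. I do not expect any real obstacle — everything is elementary sign bookkeeping apart from this last observation about when the sum of reciprocals can be negative.
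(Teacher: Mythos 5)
Your proposal is correct and follows essentially the same route as the paper's proof: substitute into Jabuka's formula, reduce the last two terms to $\sign(q+r)\bigl(1-\sign(\tfrac1p+\tfrac1q+\tfrac1r)\bigr)$ via $pqr<0$ and $qr<0$, and then observe that $\tfrac1p+\tfrac1q+\tfrac1r<0$ forces $q+r>0$. Your contradiction argument for that last step is just a rephrasing of the paper's direct observation that $\tfrac{q+r}{qr}=\tfrac1q+\tfrac1r<-\tfrac1p<0$ with $qr<0$.
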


\begin{proof}
Under the assumption
\[
\frac{1}{p} + \frac{1}{q} + \frac{1}{r} \neq 0,
\]
we have $(p + r)q \neq -pr$.

By Proposition \ref{prop:Jabuka-signature}, we obtain
\begin{align*}
\sigma(P(p, q, r))
&= (-q - 1) - (r - 1) + \sign(q + r) + \sign((q + r)(pq + qr + pr)) \\
&= -(q + r) + \sign(q + r) \left( 1 - \sign\!\left( \frac{1}{p} + \frac{1}{q} + \frac{1}{r} \right) \right),
\end{align*}
and note that
\[
\sign(pq + qr + pr)
= \sign\!\left(pqr\!\left(\frac{1}{p} + \frac{1}{q} + \frac{1}{r}\right)\!\right)
= -\sign\!\left(\frac{1}{p} + \frac{1}{q} + \frac{1}{r}\right).
\]

If $1/p + 1/q + 1/r > 0$, the final term in the last expression vanishes. 
If $1/p + 1/q + 1/r < 0$, then 
\[
\frac{q + r}{qr} = \frac{1}{q} + \frac{1}{r} < -\frac{1}{p} < 0,
\]
and hence $q + r > 0$. 
This completes the proof.
\end{proof}
\subsection[Squeezedness of $P(p,q,r)$]{Squeezedness of \texorpdfstring{$P(p,q,r)$}{P(p,q,r)}}

Let $(p, q, r)$ be integers satisfying one of the following conditions:
\begin{align*}
\text{(ODD)} 
&\ \text{$p$, $q$, and $r$ are all odd with $p, q > 0$; or} \\
\text{(EVEN)} 
&\ \text{$p$ is even, and $q$ and $r$ are odd with $p \ge 0$ and $q \le r$.}
\end{align*}

If $p$, $q$, and $r$ are all odd, then $P(p, q, r)$ is squeezed~\cite{Feller-Lewark-Lobb2024Squeezed}*{Example~2.13}. 
Therefore, if $(p, q, r)$ satisfies the condition in (ODD), then $P(p, q, r)$ is squeezed.

If $p$ is even and $q$ and $r$ are odd, then $P(p, q, r)$ is squeezed whenever $p(q + r) \le 0$ or $qr > 0$~\cite{Feller-Lewark-Lobb2024Squeezed}*{Section~4}. 
Hence, if $(p, q, r)$ satisfies the condition in (EVEN), then $P(p, q, r)$ is squeezed whenever $q + r \le 0$ or $qr > 0$.

On the other hand, if $p$ is even and $q$ and $r$ are odd, then $P(p, q, r)$ is not squeezed if $(p + q)(p + r) < 0$ \cite{Feller-Lewark-Lobb2024Squeezed}*{Section 4}. 
Therefore, if $(p, q, r)$ satisfies the condition in (EVEN), then $P(p, q, r)$ is not squeezed when $(p + q)(p + r) < 0$. 
Note that $p + q \neq 0$, $p + r \neq 0$, and $qr \neq 0$.

\begin{rem}
In general, any $2$-bridge knot is alternating (see \cite{Goodrick1972TwoBridge}*{Theorem 1}) and hence squeezed. 
If $(p, q, r)$ satisfies (EVEN) and either $|q| = 1$ or $|r| = 1$, then the pretzel knot $P(p, q, r)$ is a $2$-bridge knot, and hence it is alternating. 
Thus, in this case, $P(p, q, r)$ is squeezed.
\end{rem}

The squeezedness of $P(p, q, r)$ remains unclear only in the following case:
\begin{align*}
\text{(EVEN~X)} 
&\ \text{$p$ is even, and $q$ and $r$ are odd with } p \ge 2,\ q \le -3,\ r \ge 5, \ q + r > 0,\\
&\ \text{and } p + q > 0.
\end{align*}

\begin{lem}\label{lem:specific-classes}
If $(p, q, r)$ satisfies the condition in $\mathrm{(EVEN~X)}$, then $(p, q, r)$ can be written as
\[
(p, q, r) = (2(b + c),\ -(2b + 1),\ 2a + 1)
\]
for some positive integers $a$, $b$, and $c$ with $a > b$.
\end{lem}

\begin{proof}
If $p \ge 2$, $q \le -3$, and $r \ge 3$, then there exist positive integers $a$, $b$, and $C$ such that 
\[
p = 2C, \quad q = -(2b + 1), \quad r = 2a + 1.
\]
If $q + r > 0$, then $a > b$.  
Moreover, if $p + q > 0$, then $C \ge b + 1$.  
Hence, there exists a positive integer $c$ such that $C = b + c$.

Conversely, if $(p, q, r) = (2(b + c), -(2b + 1), 2a + 1)$ for some positive integers $a$, $b$, and $c$ with $a > b$, then the conditions in $\mathrm{(EVEN~X)}$ are satisfied.
\end{proof}

\subsection[The Rasmussen $s$-invariant of $P(p,q,r)$]{The Rasmussen \texorpdfstring{$s$}{s}-invariant of \texorpdfstring{$P(p,q,r)$}{P(p,q,r)}}

R. Suzuki~\cite{RSuzuki2010khovanov}\footnote{Note that there is a misprint in \cite{RSuzuki2010khovanov}*{Theorem 1.4}. See also \cite{KimSano2025diagrammatic}.} computed the Rasmussen $s$-invariants $s(P(p, q, r))$ for all $3$-strand pretzel knots $P(p,q,r)$ satisfying condition (ODD $0$), and for some $P(p,q,r)$ satisfying condition (EVEN $0$).  
Lewark~\cite{Lewark2014rasmussen} later calculated the $s$-invariants $s(P(p, q, r))$ for the remaining cases of $P(p,q,r)$ with condition (EVEN $0$).  
Combining these results, we obtain the following (see also \cite{KimSano2025diagrammatic}).

\begin{thm}[{\cite{RSuzuki2010khovanov}*{Theorems~1.3 and 1.4}}, {\cite{Lewark2014rasmussen}*{Theorem~4}}]
\label{thm:s(P(p,q,r))}
\noindent
\begin{enumerate}
\item[$(1)$] If $p>0$, $q>0$, and $r$ are all odd, then
\[
s(P(p, q, r)) =
\begin{cases}
0, & \min\{p, q\} \le -r, \\[1mm]
-2, & \min\{p, q\} > -r.
\end{cases}
\]

\item[$(2)$] If $p>0$ is even and $q$ and $r$ are odd with $q \le r$, then
\[
s(P(p, q, r)) =
\begin{cases}
q+r-2, & q>0, r>0, \\[1mm]
q+r, & q<0, r>0, \ q+r \le 0, \\[1mm]
q+r-2, & q<0, r>0, \ q+r>0, \ p+q>0, \\[1mm]
q+r, & q<0, r>0, \ q+r>0, \ p+q<0, \\[1mm]
q+r, & q<0, r<0.
\end{cases}
\]
\end{enumerate}
\end{thm}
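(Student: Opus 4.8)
The plan is to determine $s(P(p,q,r))$ from two complementary kinds of input: sharp positivity bounds that compute $s$ exactly in the sign-homogeneous regimes, and cobordism / crossing-change inequalities that control how $s$ moves as a single parameter is varied. Throughout I would use that $s$ is even and additive, that $s(T_{2,n}) = n-1$ for odd $n>0$, that $|s(K)| \le 2 g_4(K)$, the crossing-change inequality $s(K_-) \le s(K_+) \le s(K_-) + 2$ when $K_+$ is obtained from $K_-$ by switching a negative crossing to positive, and the slice--Bennequin inequality $s(K) \ge w(D) - O(D) + 1$ for any diagram $D$ of $K$, where $w(D)$ is the writhe and $O(D)$ the number of Seifert circles, together with the mirror form of this inequality giving an upper bound for $s$. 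The counts $w(D)$ and $O(D)$ are explicit for the standard pretzel diagram and its mirror, so the two slice--Bennequin bounds become explicit affine functions of $p,q,r$.

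For part $(1)$ the knots are genus one: the standard pretzel spanning surface (three bands with odd half-twists) is a once-punctured torus, so $g_4 \le g_3 \le 1$ and hence $s \in \{-2,0,2\}$. The mirror slice--Bennequin bound applied to the standard diagram gives $s \le 0$ throughout this range (in the paper's convention positive parameters contribute negative crossings), which rules out $s = 2$. To separate the remaining two values I would track where the slice--Bennequin lower bound crosses zero. In the regime $\min\{p,q\} > -r$ the lower bound from the standard diagram of the mirror certifies $-s \ge 2$, so $s = -2$; in the complementary regime $\min\{p,q\} \le -r$ the same explicit count improves so that $P(p,q,r)$ and its mirror yield simultaneously $s \ge 0$ and $s \le 0$, forcing $s = 0$. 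The transition across $-r = \min\{p,q\}$ is realized by a single crossing change in the $r$-tangle, consistent with the jump of $2$.

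For part $(2)$ I would anchor the computation at $p=0$ via $P(0,q,r) \cong T_{2,q}\# T_{2,r}$ and additivity, which gives the genus and the value $q+r-2$ for $q,r>0$ together with its signed analogues. For $p>0$ even I would then run the direct slice--Bennequin computation from both handednesses on the standard diagram: in every sign-homogeneous case (namely $q,r>0$; $q,r<0$; and the boundary case $q<0<r$ with $q+r\le 0$) the lower bound and its mirror upper bound coincide, pinning $s$ to the stated value $q+r$ or $q+r-2$. The explicit quasipositive (resp.\ quasinegative) Seifert surface realizing the extremal genus is what certifies that the two bounds meet; where they do, the value of $s$ is forced.

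The main obstacle is the mixed case $q<0<r$ with $q+r>0$, where the answer bifurcates on the sign of $p+q$: one gets $s = q+r-2$ when $p+q>0$ and $s = q+r$ when $p+q<0$. Here the slice--Bennequin bounds from the two handednesses differ by $2$ and neither determines $s$ on its own, so a finer tool is unavoidable. I would resolve this by a skein long exact sequence in Lee homology at a crossing of the $p$-tangle, tracking the surviving Rasmussen generator, or equivalently by producing an explicit cobordism whose Seifert surface becomes quasipositive precisely when $p+q>0$ so that the genus drops by one; the condition $p+q>0$ is exactly the threshold at which a positive band can be cancelled, which is why it governs the $\pm 2$ discrepancy. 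Isolating this threshold and showing $s = q+r-2$ there is the delicate point, and it is precisely the sub-case $\mathrm{(EVEN~X)}$ that the non-squeezedness argument later exploits.
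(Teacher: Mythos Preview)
The paper does not prove this theorem; it is quoted from R.~Suzuki and Lewark, who compute the Lee (respectively Khovanov--Rozansky) homology of three-strand pretzel knots directly and read off $s$. There is thus no proof in the present paper to compare against, and the cited arguments do not proceed via the diagrammatic bounds you propose.

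Your outline has a structural gap in exactly the subcase the paper exploits, and one of your two proposed fixes for it cannot work in principle. In the regime $p>0$ even, $q<0<r$, $q+r>0$, $p+q>0$ you want $s/2 = (q+r)/2 - 1$; but every tool you invoke up to that point --- the slice--Bennequin inequality and its mirror, the genus bound, the crossing-change inequality, and quasipositivity --- holds uniformly for \emph{every} slice-torus invariant, so any argument built solely from them must yield the same value for $s/2$ as for $q_M$. Since the whole point of the paper is that $q_M = (q+r)/2 \neq s/2$ on an infinite sub-family of this regime, no such argument can pin $s/2$ down to $(q+r)/2 - 1$ there. In particular, ``producing an explicit cobordism whose Seifert surface becomes quasipositive precisely when $p+q>0$'' is impossible: a quasipositive surface of genus $(q+r)/2 - 1$ would force $f = (q+r)/2 - 1$ for every slice-torus $f$, contradicting the computed $q_M$. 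Your alternative, a skein long exact sequence in Lee homology, is $s$-specific and is in spirit what the cited references actually do; but you have not carried it out, and tracking the quantum grading of the surviving Lee generator through that sequence is precisely the nontrivial content of Suzuki's and Lewark's computations.
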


For the computation of $s(P(0, q, r))$ in the case where both $q$ and $r$ are odd, see Section \ref{subsec:qM}. 

\subsection[The $q_M$-invariant of $P(p,q,r)$]{The \texorpdfstring{$q_M$}{q\_M}-invariant of \texorpdfstring{$P(p,q,r)$}{P(p,q,r)}}
\label{subsec:qM}
Taniguchi and the first author showed that
\[
\left|q_M(P(p, q, r)) + \frac{\sigma(P(p, q, r))}{2}\right| \le 1
\]
 in the proof of \cite{Iida-Taniguchi2024Monopoles}*{Theorem 1.14 (ii)}. 
In what follows, we will determine the difference
\[
q_M(P(p, q, r)) - \left(-\frac{\sigma(P(p, q, r))}{2}\right) \in \{-1, 0, 1\}
\]
for certain cases. 

\subsubsection{Some trivial cases}
Let $f$ be a slice-torus invariant.  
Let $(p,q,r)$ be integers satisfying one of the following conditions:
\begin{align*}
\text{(ODD)} 
&\ \text{$p$, $q$, and $r$ are all odd with $p, q > 0$; or}\\
\text{(EVEN)} 
&\ \text{$p$ is even, and $q$ and $r$ are odd with $p \ge 0$ and $q \le r$.}
\end{align*}

We first consider the case where the $3$-strand pretzel knot $P(p,q,r)$ is squeezed.  
In general, if a knot $K$ is squeezed, then
\[
f(K) = q_M(K) = \frac{s(K)}{2}.
\]

If $p$, $q$, and $r$ are all odd, then $P(p,q,r)$ is squeezed, and we have
\[
f(P(p,q,r)) = q_M(P(p,q,r)) = \frac{s(P(p,q,r))}{2}
=
\begin{cases}
0, & \min\{p, q\} \le -r, \\[2mm]
-1, & \min\{p, q\} > -r.
\end{cases}
\]

If $q$ and $r$ are odd, then $P(0, q, r)$ is isotopic to $T_{2,q} \# T_{2,r}$, and hence
\[
f(P(0, q, r)) = f(T_{2,q}) + f(T_{2,r}).
\]

For a negative odd integer $n$, we have
\[
f(T_{2,n}) = f(T_{2,-n}^*) = -f(T_{2,-n}),
\]
since $T_{2,n}$ is isotopic to $T_{2,-n}^*$.  
Therefore,
\[
f(T_{2,n}) =
\begin{cases}
\displaystyle \frac{n-1}{2}, & n>0, \\[2mm]
\displaystyle \frac{-n+1}{2}, & n<0,
\end{cases}
\quad\text{and hence}\quad
f(P(0, q, r)) =
\begin{cases}
\displaystyle \frac{|q+r-2|}{2}, & qr>0, \\[2mm]
\displaystyle \frac{|q-r|}{2}, & qr<0.
\end{cases}
\]

If $p>0$ is even and either $q+r \le 0$ or $qr>0$, then $P(p,q,r)$ is squeezed, and we have
\begin{align*}
f(P(p,q,r))
&= q_M(P(p,q,r)) = \frac{s(P(p,q,r))}{2}\\
&=
\begin{cases}
\displaystyle \frac{q+r}{2}-1, & q>0,\, r>0, \\[2mm]
\displaystyle \frac{q+r}{2}, & 
\text{
$
\begin{array}{l}
q<0,\, r>0,\, q+r\le0,\\
\text{or } q<0,\, r<0.
\end{array}
$
}
\end{cases}
\end{align*}

\begin{rem}
\label{rem: 2-bridge case}
If $1 \in \{|p|,|q|,|r|\}$, then $P(p,q,r)$ is a $2$-bridge knot. 
For example, $P(\pm 1, q, r)$ is isotopic to the $2$-bridge knot with Conway notation $C[q, \mp 1, r]$ (with the same choice of signs). 
Hence $P(p, q, r)$ is squeezed in this case. 
Therefore,
\[
f(P(p,q,r)) = q_M(P(p,q,r)) = \frac{s(P(p,q,r))}{2} = -\frac{\sigma(P(p,q,r))}{2}
\]
by Theorem \ref{thm:Feller-Lewark-Lobb2024Squeezed}. 
\end{rem}

\begin{rem}
\label{rem: ribbon case}
If $\{1, a, -a-4\} = \{p, q, r\}$ for some integer $a$, or $(p+q)(q+r)(r+p) = 0$, then $P(p,q,r)$ is ribbon (see \cite{Lisca2007Lens} and \cite{Lisca2007Sums} for the former cases, and \cite{GreeneJabuka2011slice}*{Theorem 1.1} and \cite{Lecuona2015slice}*{Theorem 1.1} for the latter).  
In particular, $P(p,q,r)$ is slice, and hence $g_4(P(p,q,r)) = 0$.  
Thus,
\[
|f(P(p,q,r))| \le g_4(P(p,q,r)) = 0,
\]
and consequently,
\[
f(P(p,q,r)) = q_M(P(p,q,r)) = 0.
\]
\end{rem}

Note that $q_M(P(p,q,r))$ has been completely determined when $p$, $q$, and $r$ are all odd. 

In what follows, we focus on the case of the $3$-strand pretzel knot $P(p,q,r)$ where $p$ is even and $q$ and $r$ are odd, satisfying $p \ge 2$, $q \le -3$, $r \ge 5$, and $q+r > 0$.  
Within this setting, there are two subcases depending on the sign of $p+q$:
\begin{align*}
\text{(EVEN X)} &:\ p+q > 0, \\
\text{(EVEN Y)} &:\ p+q < 0.
\end{align*}
We will consider these two subcases separately in the sequel. 

\subsection{A simple consequence of N\'emethi's graded root theory}
\label{subsec:GradedRoot}

In this section, we summarize a simple consequence of N\'emethi's graded root theory~\cite{Nemethi2005OzsvathSzabo}, which will be used in this paper.  
We only employ it to show that certain plumbed $3$-manifolds are L-spaces with respect to a given $\mathrm{Spin}^c$ structure.  
We also recall a relationship between pretzel knots and plumbing descriptions.  
For details, see \cite{Issa2018Classification} for Montesinos knots and \cite{Neumann-Raymond2006Seifert} for plumbing graphs. 

Let $M(e_0; a_1/b_1, \ldots, a_l/b_l)$ denote the Montesinos knot, where $e_0$, $a_i$, and $b_i$ are integers such that each pair $(a_i, b_i)$ is coprime.  
Note that the $3$-strand pretzel knot $P(p,q,r)$ is isotopic to $M(0; p/1, q/1, r/1)$. 

A \textit{$\tau$-sequence} $(\tau_K(n))_{n=0}^\infty$ associated with $K = M(e_0; a_1/b_1, \ldots, a_l/b_l)$, where 
$0 \le b_i < -a_i$ for all $1 \le i \le l$, and 
\[
e := e_0 - \sum_{i=1}^l\frac{b_i}{a_i} < 0,
\]
is defined by 
\[
\tau_K(0) := 0, \quad
\tau_K(n+1) := \tau_K(n) + \Delta_K(n) \quad \text{for } n \ge 0,
\]
where
\[
\Delta_K(n) := 1 - e_0 n + \sum_{i=1}^{l} \left\lfloor \frac{-b_in}{a_i} \right\rfloor
\]
for each nonnegative integer $n$.

Assume that $(p, q, r)$ satisfies $p \ge 2$, $q \le -2$, and $r \ge 2$.  
If $1/p + 1/q + 1/r > 0$, then the Montesinos knot $M(0; p/1, q/1, r/1)$ is isotopic to $M(-2; -p/(p-1), q, -r/(r-1))$.  
In this case, since $-p/(p-1) < -1$, $q < -1$, and $-r/(r-1) < -1$, a surgery diagram for the double branched cover $\Sigma_2(P(p,q,r))$ of $P(p,q,r)$ 
is represented by the weighted star-shaped graph $\Gamma$ shown in Figure \ref{fig:almost simple linear graph}.

Here we use the continued fraction expansion
\[
-\frac{s}{s-1} = [-2, \ldots, -2] := -2 - \dfrac{1}{-2 - \dfrac{1}{\ddots - \dfrac{1}{-2}}}, \quad s \ge 2,
\]
where $[-2, \ldots, -2]$ contains $s-1$ entries of $-2$. 
A graph such as $\Gamma$ is called an \textit{almost simple linear graph} (see \cite{Karakurt-Savk2019OzsvathSzabo}, \cite{Karakurt-Savk2022Almost} and \cite{TSuzuki2023OzsvathSzabo}). 

\begin{figure}[htbp]
    \centering
    \begin{overpic}[scale=0.6
    ]{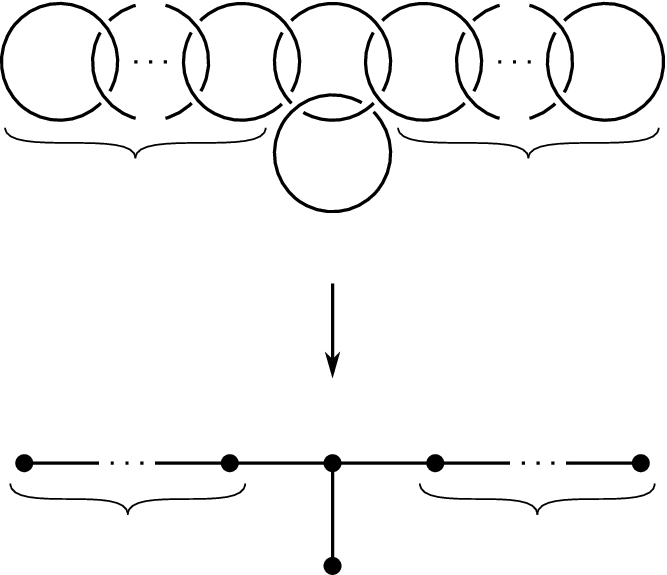}
    \put(4,87){$-2$}
    \put(31,87){$-2$}
    \put(45,87){$-2$}
    \put(60,87){$-2$}
    \put(86,87){$-2$}

    \put(14,57){$p-1$}
    \put(41,54){$q$}
    \put(73,57){$r-1$}

    \put(0,20){$-2$}

    \put(30,20){$-2$}
    \put(45,20){$-2$}
    \put(61,20){$-2$}

    \put(93,20){$-2$}
    \put(14,4){$p-1$}
    \put(44,0){$q$}
    \put(74,4){$r-1$}
\end{overpic}
\caption{A surgery diagram (top) and the corresponding plumbing graph (bottom) of $\Sigma_2(P(p,q,r))$, where $p \ge 2$, $q \le -2$, $r \ge 2$, and $1/p + 1/q + 1/r > 0$.}
\label{fig:almost simple linear graph}
\end{figure}
Since 
\[
-2+\frac{p-1}{p}+\frac{1}{-q}+\frac{r-1}{r}
= -\frac{1}{p} - \frac{1}{q} - \frac{1}{r} < 0,
\]
the plumbing graph $\Gamma$ is negative definite (see \cite{Nemethi2005OzsvathSzabo}*{Section~11.1}), 
and therefore $\Sigma_2(P(p,q,r))$
is the Seifert fibered 3–manifold with Seifert invariant
\[
(-2;\, (p, p-1),\, (-q, 1),\, (r, r-1)).
\]
Hence, we obtain 
\[
\Delta_{P(p,q,r)}(n)
= 1 + 2n 
+ \bigg\lfloor \frac{-(p-1)n}{p} \bigg\rfloor
+ \bigg\lfloor \frac{-n}{-q} \bigg\rfloor
+ \bigg\lfloor \frac{-(r-1)n}{r} \bigg\rfloor
= 1
+ \bigg\lfloor \frac{n}{p} \bigg\rfloor
+ \bigg\lfloor \frac{n}{q} \bigg\rfloor
+ \bigg\lfloor \frac{n}{r} \bigg\rfloor.
\]

If $1/p+1/q+1/r<0$, then the Montesinos knot $M(0;-p,-q,-r)$ is isotopic to $M(-1;-p,-q/(1-q),-r)$. 
In this case, since $-p<-1,-q/(1-q)<-1$, and $-r<-1$, 
a surgery diagram for the double branched cover $\Sigma_2(-P(p,q,r))$ of $-P(p,q,r)$ represented by the weighted star-shaped graph $\Gamma$ shown in Figure \ref{fig:weighted star-shaped graph}. 

\begin{figure}[htbp]
    \centering
    \begin{overpic}[scale=0.6
    ]{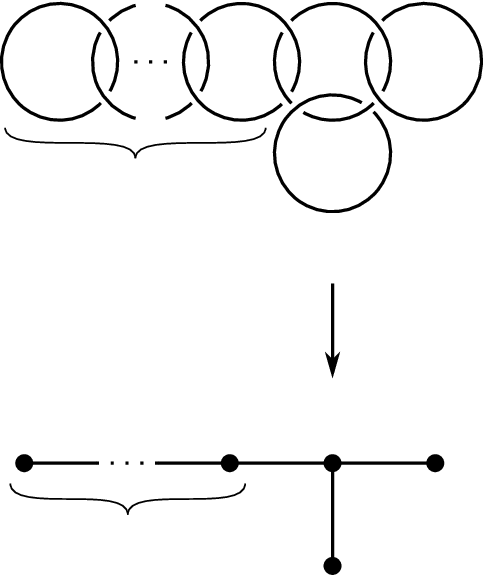}
    \put(4,102){$-2$}
    \put(38,102){$-2$}
    \put(53,102){$-1$}
    \put(70,102){$-r$}

    \put(12,66){$-q-1$}
    \put(41,64){$-p$}

    \put(0,23){$-2$}

    \put(36,23){$-2$}
    \put(53,23){$-1$}
    \put(72,23){$-r$}

    \put(12,4){$-q-1$}
    \put(47,0){$-p$}
\end{overpic}
\caption{A surgery diagram (top) and the corresponding plumbing graph (bottom) of $\Sigma_2(-P(p,q,r))$, where $p \ge 2$, $q \le -2$, $r \ge 2$, and $1/p + 1/q + 1/r < 0$. }
\label{fig:weighted star-shaped graph}
\end{figure}
Since 
\[
-1+\left(\frac{1}{p}+\frac{-q-1}{-q}+\frac{1}{r}\right)=\frac{1}{p}+\frac{1}{q}+\frac{1}{r}<0, 
\]
the plumbing graph $\Gamma$ is negative definite (see \cite{Nemethi2005OzsvathSzabo}*{Section~11.1}), 
and therefore $\Sigma_2(-P(p,q,r))$
is the Seifert fibered 3–manifold with Seifert invariant
\[
(-1;\, (p, 1), \,(-q, -q-1),\, (r, 1)).
\]
Hence, we obtain 
\[
\Delta_{P(p,q,r)}(n)
= 1 + n + \left\lfloor\frac{-n}{p}\right\rfloor 
+ \left\lfloor\frac{-(-q-1)n}{-q}\right\rfloor
+ \left\lfloor\frac{-n}{r}\right\rfloor
= 1 + \left\lfloor\frac{-n}{p}\right\rfloor
+ \left\lfloor\frac{-n}{q}\right\rfloor
+ \left\lfloor\frac{-n}{r}\right\rfloor.
\]

\begin{prop}
\label{prop:tauL}
Let $K = M(e_0; a_1/b_1, \ldots, a_l/b_l)$ be a Montesinos knot such that 
$0 \le b_i < -a_i$ for every $1 \le i \le l$, and set 
\[
e = e_0 - \sum_{i=1}^l \frac{b_i}{a_i} < 0.
\]
If the $\tau$-sequence $(\tau_K(n))_{n=0}^\infty$ of $K$ is non-decreasing, 
then the double branched cover
\[
\Sigma_2\big(M(e_0; a_1/b_1, \ldots, a_l/b_l)\big)
\]
is an L-space (with respect to all $Spin^c$ structure).
\end{prop}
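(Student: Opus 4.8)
The plan is to obtain the Heegaard Floer homology of $\Sigma_2(K)$ from N\'emethi's graded-root description of negative-definite plumbed rational homology spheres, and to read off the L-space property from it. Under the standing hypotheses $0\le b_i<-a_i$ and $e=e_0-\sum_i b_i/a_i<0$, expanding each fraction $-a_i/b_i$ as a continued fraction with entries $\le -2$ converts the Seifert presentation of $\Sigma_2\big(M(e_0;a_1/b_1,\dots,a_l/b_l)\big)$ into a star-shaped plumbing graph $\Gamma$ with central weight $e_0$, and $e<0$ makes $\Gamma$ negative definite (exactly as in the two figures above; see \cite{Nemethi2005OzsvathSzabo}*{Section~11.1}). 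A star-shaped negative-definite graph is \emph{almost rational} in N\'emethi's sense, so his machinery applies: the canonical graded root $R_{\mathrm{can}}(\Gamma)$ is the graded root associated to the sequence $(\tau_K(n))_{n\ge0}$ defined above --- its branch points being governed by the local minima of $\tau_K$ --- and, up to an overall grading shift, $HF^+(-\Sigma_2(K),\mathfrak{s}_{\mathrm{can}})$ is the homology of $R_{\mathrm{can}}(\Gamma)$.

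First I would confirm that the theory genuinely applies, i.e.\ that $\tau_K$ is eventually strictly increasing: since $-b_i/a_i=b_i/|a_i|\in[0,1)$, one has $\Delta_K(n)=1-e_0n+\sum_i\lfloor -b_in/a_i\rfloor=1+(e-2e_0)n+O(1)$, and because $e_0\le e<0$ forces $2e_0<e_0\le e$, the slope $e-2e_0$ is positive, so $\tau_K(n)\to+\infty$. Together with $\tau_K(0)=0$, $\Delta_K(0)=1$, and the hypothesis that $(\tau_K(n))$ is non-decreasing, the sequence attains its minimum only at $n=0$, so $R_{\mathrm{can}}(\Gamma)$ has no branch points: it is a single infinite stem. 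Hence $HF^+_{\mathrm{red}}(-\Sigma_2(K),\mathfrak{s}_{\mathrm{can}})=0$, i.e.\ $\Sigma_2(K)$ is an L-space with respect to the canonical $\mathrm{Spin}^c$ structure.

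To promote this to \emph{all} $\mathrm{Spin}^c$ structures I would invoke N\'emethi's rationality theory. For a negative-definite (hence, in the star-shaped case, almost rational) plumbing graph, triviality of the canonical graded root --- equivalently $HF^+_{\mathrm{red}}(-M,\mathfrak{s}_{\mathrm{can}})=0$ --- is equivalent to $\Gamma$ being a \emph{rational} graph; this is the Artin--Laufer rationality criterion recast in graded-root form \cite{Nemethi2005OzsvathSzabo}. For a rational graph, N\'emethi computes $HF^+(-M,\mathfrak{s})=\mathcal{T}^+_{d(\mathfrak{s})}$, with no reduced summand, for \emph{every} $\mathrm{Spin}^c$ structure $\mathfrak{s}$; hence $M=\Sigma_2(K)$ is an L-space. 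Alternatively one may cite a statement of exactly this shape --- a non-decreasing $\tau$-sequence forces the L-space property --- directly from the literature on almost simple linear / almost rational graphs, e.g.\ \cite{Karakurt-Savk2019OzsvathSzabo}, \cite{Karakurt-Savk2022Almost}, \cite{TSuzuki2023OzsvathSzabo}, whichever is phrased in the cleanest generality.

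The main obstacle, and the only point where a reader is likely to want more than a citation, is precisely this last passage from one $\mathrm{Spin}^c$ structure to all of them: it is not formal, resting on N\'emethi's theorem that rationality of a negative-definite plumbing is already detected on the canonical $\mathrm{Spin}^c$ structure, together with his structure theorem identifying the links of rational graphs as L-spaces. Everything else is bookkeeping: the orientation convention (the graded root computes $HF^+$ of $-\Sigma_2(K)$, but the L-space property is insensitive to orientation reversal, so there is no conflict when the excerpt instead works with $\Sigma_2(-P(p,q,r))=-\Sigma_2(P(p,q,r))$), the grading/parity shift between the graded-root homology and $HF^+$, and checking once and for all that $0\le b_i<-a_i$ together with $e<0$ indeed produce the negative-definite star-shaped graph to which the graded-root theory applies.
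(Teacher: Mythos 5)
Your proposal is correct, and it works in the same general framework as the paper: realize $\Sigma_2(K)$ as the link of a negative-definite star-shaped plumbing and use N\'emethi's graded-root theory, where a non-decreasing $\tau$-sequence forces the graded root to be a bare stem and hence $HF^+_{\mathrm{red}}=0$ in the corresponding $\mathrm{Spin}^c$ structure. The one step you flag as ``the main obstacle'' --- promoting the conclusion from the canonical $\mathrm{Spin}^c$ structure to all of them --- is exactly where your route differs from the paper's. You go through the rationality theory: trivial canonical graded root $\Leftrightarrow$ rational graph $\Rightarrow$ the link is an L-space in every $\mathrm{Spin}^c$ structure. The paper instead stays entirely at the level of $\tau$-sequences: it quotes N\'emethi's explicit formula (Section~11.12 of \cite{Nemethi2005OzsvathSzabo}) for the increments of the $\tau$-sequence attached to an arbitrary $\mathrm{Spin}^c$ structure, which has the same shape as $\Delta_K(n)$ except for non-negative integer shifts $A_0,\dots,A_l$ inserted into the linear term and into the floor functions, and observes that non-negativity of the increments in the case $A_0=\cdots=A_l=0$ forces non-negativity for every choice of the $A_i$; hence every $\tau$-sequence is non-decreasing and every graded root is trivial. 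Both arguments ultimately cite the same source, but the paper's is a one-line monotonicity check once the formula is accepted, whereas yours invokes the heavier (though standard) structural theorem that rationality of an almost rational graph is detected on the canonical class and implies the L-space property globally. Your two side checks --- that $\tau_K(n)\to+\infty$ because the asymptotic slope $e-2e_0$ is positive under the hypotheses $0\le b_i<-a_i$ and $e<0$, and that the L-space property is insensitive to the orientation reversal implicit in computing $HF^+$ of $-\Sigma_2(K)$ --- are correct and are points the paper leaves implicit.
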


\begin{proof}
According to section 11.12 of \cite{Nemethi2005OzsvathSzabo}, 
The $\tau$-sequence for general $\mathrm{Spin}^c$ structure $\s$
is also defined and given by 
\[
\tau_\s(0)=0, 
\]
\[
\tau_\s(n+1)-\tau_\s(n)=1+A_0-ne_0+\sum^l_{i=1}\bigg\lfloor \frac{-b_in+A_i}{a_i}\bigg\rfloor
\]
for some $(A_0, \dots, A_l)=(A_0(\s), \dots, A_l(\s)) \in (\Z^{\geq 0})^l$.
This implies if the $\tau$-sequence $(\tau_K(n))_{n=0}^\infty$ for $A_0=\cdots=A_l=0$ is non-decreasing, the $\tau$-sequence $(\tau_{\s}(n))_{n=0}^\infty$ for general $\s$ is non-decreasing as well.

\end{proof}

\subsection{(EVEN Y) case}
In this subsection, we focus on the case of the $3$-strand pretzel knot $P(p,q,r)$ where 
(EVEN Y): $p$ is even and $q$ and $r$ are odd, satisfying $p \ge 2$, $q \le -3$, $r \ge 5$, $q+r > 0$, and $p+q < 0$. 

\begin{thm}
\label{thm: main Y}
Suppose that $p$ is even and $q$ and $r$ are odd, satisfying 
$p \ge 2$, $q \le -3$, $r \ge 5$, $q + r > 0$, and $p + q < 0$. 
Then we have 
\[
q_M(P(p,q,r)) = \frac{q + r}{2}.
\]
\end{thm}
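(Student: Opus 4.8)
The plan is to deduce the theorem from \Cref{thm: L with s0} together with the signature computation of \Cref{cor: sign of P(p, q, r)}, so that the only real content is to prove that $\Sigma_2(P(p,q,r))$ is an $L$-space with respect to $\mathfrak{s}_0$. The first observation is that the (EVEN Y) hypotheses already fix the sign of $1/p+1/q+1/r$: from $p+q<0$ we get $0<p<-q$, hence $\tfrac1p+\tfrac1q>0$, and adding $\tfrac1r>0$ gives
\[
\frac1p+\frac1q+\frac1r>0 .
\]
In particular $r\neq -q$ (indeed $r>-q$, since $q+r>0$), so \Cref{cor: sign of P(p, q, r)} applies and yields $\sigma(P(p,q,r))=-(q+r)$. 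Thus it suffices to show that $\Sigma_2(P(p,q,r))$ is an $L$-space with respect to $\mathfrak{s}_0$, for then \Cref{thm: L with s0} gives $q_M(P(p,q,r))=-\sigma(P(p,q,r))/2=(q+r)/2$.

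Next I would invoke the graded root discussion of \Cref{subsec:GradedRoot}. Since $p\ge 2$, $q\le -2$, $r\ge 2$, and $1/p+1/q+1/r>0$ in this regime, $\Sigma_2(P(p,q,r))$ is the negative definite almost simple linear plumbed $3$-manifold whose $\Delta$-function is
\[
\Delta_{P(p,q,r)}(n)=1+\left\lfloor\frac np\right\rfloor+\left\lfloor\frac nq\right\rfloor+\left\lfloor\frac nr\right\rfloor .
\]
By \Cref{prop:tauL} it is enough to check that the associated $\tau$-sequence is non-decreasing, i.e. that $\Delta_{P(p,q,r)}(n)\ge 0$ for all $n\ge 0$. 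Writing $q=-|q|$ with $|q|\ge 3$, one has $\lfloor n/q\rfloor=-\lceil n/|q|\rceil$ for $n\ge 0$, and since $0<p\le|q|$ implies $\lfloor n/|q|\rfloor\le\lfloor n/p\rfloor$, we get
\[
\left\lfloor\frac nq\right\rfloor=-\left\lceil\frac n{|q|}\right\rceil\ge-\left\lfloor\frac n{|q|}\right\rfloor-1\ge-\left\lfloor\frac np\right\rfloor-1 .
\]
Substituting this into the formula for $\Delta_{P(p,q,r)}(n)$ gives $\Delta_{P(p,q,r)}(n)\ge\lfloor n/r\rfloor\ge 0$ for every $n\ge 0$, so the $\tau$-sequence is non-decreasing. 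Hence, by \Cref{prop:tauL}, $\Sigma_2(P(p,q,r))$ is an $L$-space, in particular an $L$-space with respect to $\mathfrak{s}_0$, which completes the argument.

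I do not expect a genuine obstacle here: the plumbing description, the formula for $\Delta_{P(p,q,r)}$, and the $L$-space criterion are all already available from the earlier sections, and what remains is the elementary sign observation $1/p+1/q+1/r>0$ — exactly the feature distinguishing (EVEN Y) from (EVEN X) — together with the one-line floor/ceiling estimate above. The genuinely delicate case should rather be (EVEN X), where $1/p+1/q+1/r$ may be negative, the signature picks up the extra $+2$, and the $L$-space argument alone no longer suffices to evaluate $q_M$.
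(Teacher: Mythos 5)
Your proposal is correct and follows essentially the same route as the paper: establish $1/p+1/q+1/r>0$ from the (EVEN Y) hypotheses, use the plumbing formula for $\Delta_{P(p,q,r)}(n)$, verify $\Delta_{P(p,q,r)}(n)\ge 0$ via the same floor/ceiling estimate exploiting $0<p<-q$, and conclude with Proposition~\ref{prop:tauL}, Theorem~\ref{thm: L with s0}, and Corollary~\ref{cor: sign of P(p, q, r)}. The only difference is cosmetic bookkeeping in the inequality $1+\lfloor n/p\rfloor+\lfloor n/q\rfloor\ge 0$.
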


\begin{proof}
Under these assumptions, we have
\[
\frac{1}{p} + \frac{1}{q} + \frac{1}{r} > 0.
\]
Moreover, since $0 < p < -q$, it follows that
\[
1 + \bigg\lfloor \frac{n}{p} \bigg\rfloor + \bigg\lfloor \frac{n}{q} \bigg\rfloor 
\ge 
\bigg\lfloor \frac{n}{p} \bigg\rfloor + \bigg\lceil \frac{n}{q} \bigg\rceil
= 
\bigg\lfloor \frac{n}{p} \bigg\rfloor - \bigg\lfloor -\frac{n}{q} \bigg\rfloor 
\ge 0
\]
for all $n \ge 0$. 
Hence, by Section \ref{subsec:GradedRoot}, we obtain
\[
\Delta_{P(p,q,r)}(n)
= 
1 + \bigg\lfloor \frac{n}{p} \bigg\rfloor 
  + \bigg\lfloor \frac{n}{q} \bigg\rfloor 
  + \bigg\lfloor \frac{n}{r} \bigg\rfloor 
\ge 0
\]
for all $n \ge 0$.  
Therefore, the $\tau$-sequence $(\tau_{P(p,q,r)}(n))_{n=0}^\infty$ of the double branched cover $\Sigma_2(P(p,q,r))$ is non-decreasing.  
By Proposition~\ref{prop:tauL}, it follows that $\Sigma_2(P(p,q,r))$ is an $L$-space with respect to the unique $\mathbb{Z}_2$-invariant $\mathrm{Spin}^c$ structure $\mathfrak{s}_0$, and hence
\[
\dim_{\mathbb{F}_2} \widehat{HF}(\Sigma_2(P(p,q,r)), \mathfrak{s}_0) = 1.
\]

Applying Theorem~\ref{thm: L with s0} and Corollary~\ref{cor: sign of P(p, q, r)}, we conclude that
\[
q_M(P(p,q,r))
=
-\frac{\sigma(P(p,q,r))}{2}
=
\frac{q + r}{2}.
\]
\end{proof}

\begin{rem}
For the $3$-strand pretzel knot $P(p,q,r)$ in the case (EVEN Y), we have $p + q < 0$.  
Hence, by Theorem \ref{thm:s(P(p,q,r))}, 
\[
\frac{s(P(p,q,r))}{2} = \frac{q + r}{2}.
\]
Therefore, 
\[
q_M(P(p,q,r)) = \frac{s(P(p,q,r))}{2}
\]
holds in this case.  

However, Lewark \cite{Lewark2014rasmussen} proved that pretzel knots $P(p,q,r)$ of type (EVEN Y) are not squeezed, by comparing the Rasmussen invariant $s(P(p,q,r))$ with the Khovanov--Rozansky $\mathfrak{sl}_3$ slice-torus invariant $s_3(P(p,q,r))$.
\end{rem}

\begin{cor}
If $q$ and $r$ are odd integers satisfying $q \le -3$, $r \ge 5$, and $q + r > 0$, then 
\[
q_M(P(2,q,r)) = \frac{q + r}{2}.
\]
\end{cor}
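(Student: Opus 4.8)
The plan is to recognize this corollary as the special case $p = 2$ of Theorem~\ref{thm: main Y}, so the entire argument reduces to verifying the hypotheses of that theorem. First I would record what is already given: $p = 2$ is even, and $q$, $r$ are odd integers with $q \le -3$, $r \ge 5$, and $q + r > 0$. These match four of the five conditions of Theorem~\ref{thm: main Y} verbatim.

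The only remaining hypothesis of Theorem~\ref{thm: main Y} to check is $p + q < 0$. With $p = 2$ this reads $2 + q < 0$, i.e. $q < -2$, which is an immediate consequence of $q \le -3$. Hence the triple $(2, q, r)$ indeed belongs to case (EVEN Y), and Theorem~\ref{thm: main Y} applies directly to give $q_M(P(2,q,r)) = \frac{q+r}{2}$, which is the assertion.

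I do not anticipate any genuine obstacle here: all of the real work — the plumbing/graded-root description of $\Sigma_2(P(p,q,r))$, the non-decreasingness of the $\tau$-sequence, the $L$-space conclusion via Proposition~\ref{prop:tauL}, and the signature computation via Corollary~\ref{cor: sign of P(p, q, r)} — is packaged inside Theorem~\ref{thm: main Y}. The one small point worth noting is that the sign condition $p + q < 0$ is automatic when $p = 2$, so no additional case analysis is needed. As a consistency check (not required for the proof) one may also observe that $\tfrac1p + \tfrac1q + \tfrac1r = \tfrac12 + \tfrac1q + \tfrac1r > 0$ throughout this range: since $q + r > 0$ forces $r > |q| \ge 3$, we get $\left|\tfrac1q + \tfrac1r\right| < \tfrac13 < \tfrac12$. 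This is exactly the inequality underlying the surgery diagram of $\Sigma_2(P(2,q,r))$ used in the proof of Theorem~\ref{thm: main Y}, but it need not be re-derived, since the theorem has already incorporated it.
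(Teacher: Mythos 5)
Your proposal is correct and matches the paper's own proof, which likewise observes that the corollary is the special case $p=2$ of Theorem~\ref{thm: main Y}, the only hypothesis needing verification being $p+q = 2+q \le -1 < 0$. No further comment is needed.
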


\begin{proof}
This is a special case of Theorem \ref{thm: main Y}, since $2 + q \le -1 < 0$.
\end{proof}

\subsection{(EVEN X) case}
Finally, we will only consider the case of the $3$-strand pretzel knot with (EVEN X): $p\ge4$ is even, $q\le-3$ is odd, $r\ge5$ is odd, $q+r>0$ and $p + q > 0$. 
This case is equivalent to considering  
\[
P(2(b+c), -(2b+1), 2a+1)
\quad (a,b,c \text{ are positive integers with } a > b)
\]
by Lemma~\ref{lem:specific-classes}.  
Under this assumption, one can easily verify that
\[
\frac{1}{2(b+c)} - \frac{1}{2b+1} + \frac{1}{2a+1} \neq 0
\]
holds.  

\begin{lem}\label{lem: P(2b+2,-(2b+1),2a+1)}
For integers $b>0$ and $b+1 \le a \le 2b$, the $\tau$–sequence  
\[
(\tau_{P(2b+2,-(2b+1),2a+1)}(n))_{n=0}^\infty
\]
of the double branched cover $\Sigma_2(P(2b+2,-(2b+1),2a+1))$ is non-decreasing. 
\end{lem}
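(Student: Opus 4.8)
The plan is to apply Proposition~\ref{prop:tauL}, so the whole lemma reduces to checking that the forward difference of the $\tau$-sequence of $\Sigma_2(K)$, with $K=P(2b+2,-(2b+1),2a+1)$, is nonnegative. Since $\tau_K(n+1)-\tau_K(n)=\Delta_K(n)$ by definition, the sequence $(\tau_K(n))_{n=0}^{\infty}$ is non-decreasing exactly when $\Delta_K(n)\ge 0$ for every $n\ge 0$, and this is what I would prove.

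First I would fix which of the two formulas of Section~\ref{subsec:GradedRoot} is in force. With $(p,q,r)=(2b+2,-(2b+1),2a+1)$ and $b+1\le a\le 2b$ we have $2a+1\le 4b+1<(2b+1)(2b+2)$, hence
\[
\frac{1}{p}+\frac{1}{q}+\frac{1}{r}=\frac{1}{2a+1}-\frac{1}{(2b+1)(2b+2)}>0 .
\]
So we are in the first regime of Section~\ref{subsec:GradedRoot}, and, rewriting the middle term as a ceiling,
\[
\Delta_K(n)=1+\Big\lfloor\frac{n}{2b+2}\Big\rfloor-\Big\lceil\frac{n}{2b+1}\Big\rceil+\Big\lfloor\frac{n}{2a+1}\Big\rfloor .
\]
Thus the lemma is equivalent to the elementary inequality $\lceil n/(2b+1)\rceil\le 1+\lfloor n/(2b+2)\rfloor+\lfloor n/(2a+1)\rfloor$ for all $n\ge 0$.

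The key step is the substitution $n=(2b+2)m+j$, where $m=\lfloor n/(2b+2)\rfloor\ge 0$ and $j\in\{0,1,\dots,2b+1\}$. Since $n=(2b+1)m+(m+j)$, this gives $\lceil n/(2b+1)\rceil=m+\lceil(m+j)/(2b+1)\rceil$, so that
\[
D(n):=\Big\lceil\frac{n}{2b+1}\Big\rceil-\Big\lfloor\frac{n}{2b+2}\Big\rfloor=\Big\lceil\frac{m+j}{2b+1}\Big\rceil .
\]
If $D(n)\le 1$, then $\Delta_K(n)=1-D(n)+\lfloor n/(2a+1)\rfloor\ge 0$ and there is nothing to prove. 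If $D(n)=k+1$ for an integer $k\ge 1$, then $m+j\ge (2b+1)k+1$; combined with $j\le 2b+1$ this forces $m\ge (2b+1)(k-1)+1$, and therefore
\[
n=(2b+1)m+(m+j)\ \ge\ (2b+1)\bigl[(2b+1)(k-1)+1\bigr]+\bigl[(2b+1)k+1\bigr]=(4b^2+2b+1)(k-1)+k(4b+1)+2 .
\]
Since $a\le 2b$ gives $2a+1\le 4b+1$, the right-hand side is $\ge k(2a+1)$, so $\lfloor n/(2a+1)\rfloor\ge k$ and hence $\Delta_K(n)=1-(k+1)+\lfloor n/(2a+1)\rfloor\ge 0$. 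This establishes $\Delta_K(n)\ge 0$ for all $n\ge 0$, and Proposition~\ref{prop:tauL} then yields the conclusion.

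The only genuine difficulty is the bookkeeping in the last display. The extremal case is $j=2b+1$, that is, $n$ sitting just below a multiple of $2b+2$, where $\lceil n/(2b+1)\rceil$ is as large as possible relative to $\lfloor n/(2b+2)\rfloor$ and the term $\lfloor n/(2a+1)\rfloor$ must absorb the deficit; this is exactly where the hypothesis $a\le 2b$ is used, and it is close to sharp, since already $\Delta_{P(4,-3,9)}(7)=1+1-3+0=-1<0$ (here $a=4>2b=2$). Everything else is a routine verification with floors and ceilings.
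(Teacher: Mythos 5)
Your proof is correct, and it takes a genuinely different route from the paper's. Both arguments begin the same way: verify that $1/p+1/q+1/r>0$ (via $2a+1\le 4b+1<(2b+1)(2b+2)$) so that the first formula of \cref{subsec:GradedRoot} applies, and reduce the lemma to the inequality $\Delta_K(n)=1+\lfloor n/(2b+2)\rfloor-\lceil n/(2b+1)\rceil+\lfloor n/(2a+1)\rfloor\ge 0$. The paper then runs a six-way case analysis on the quotient $Q$ of $n$ by $t=2b+1$ (the cases $Q=0$, $Q=1$ with zero or positive remainder, $2\le Q\le t$, and $Q\ge t+1$, the last split further by $a\ge 3$ versus $a=2$ and handled with real-number estimates). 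You instead write $n=(2b+2)m+j$ with $0\le j\le 2b+1$ and use the identity $\lceil n/(2b+1)\rceil-\lfloor n/(2b+2)\rfloor=\lceil (m+j)/(2b+1)\rceil$, which packages the entire ``deficit'' into a single ceiling $D(n)$; the case $D(n)\le 1$ is then immediate, and the case $D(n)=k+1\ge 2$ follows from the lower bound $n\ge k(2a+1)$, which is where $2a+1\le 4b+1$ enters --- the same place the hypothesis $a\le 2b$ is used in the paper's Case 4. I checked your algebra (the expansion $(2b+1)\bigl[(2b+1)(k-1)+1\bigr]+(2b+1)k+1=(4b^2+2b+1)(k-1)+k(4b+1)+2$ is correct) and the deduction $\lfloor n/(2a+1)\rfloor\ge k$. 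Your approach buys uniformity: the large-$n$ cases that the paper treats with separate asymptotic estimates are absorbed into the same two-line argument, and your identity makes transparent why $j=2b+1$ is extremal and why $a\le 2b$ is close to sharp; the check $\Delta_{P(4,-3,9)}(7)=-1$ is a nice confirmation. One cosmetic point: the lemma as stated only asserts that the $\tau$-sequence is non-decreasing, so \cref{prop:tauL} is not actually needed inside its proof (it is invoked afterwards, in the proof of \cref{thm:main}); your core computation $\Delta_K(n)\ge 0$ is exactly what the lemma requires.
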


\begin{proof}
Under these assumptions, we have
\[
\frac{1}{2b+2} + \frac{1}{2b+1} + \frac{1}{2a+1} > 0.
\]
Hence, by the formula in Section~\ref{subsec:GradedRoot},  
\[
\Delta_{P(2b+2,-(2b+1),2a+1)}(n)
= 1 + \bigg\lfloor \frac{n}{2b+2} \bigg\rfloor
+ \bigg\lfloor \frac{-n}{2b+1} \bigg\rfloor
+ \bigg\lfloor \frac{n}{2a+1} \bigg\rfloor.
\]

Let $t = 2b + 1$. Then
\[
\Delta_{P(t+1,-t,2a+1)}(n)
=
\left(1 + \bigg\lfloor \frac{n}{t+1} \bigg\rfloor
+ \bigg\lfloor \frac{n}{2a+1} \bigg\rfloor \right)
- \bigg\lceil \frac{n}{t} \bigg\rceil.
\]
Denote by $Q_{n,t}$ and $R_{n,t}$ the quotient and remainder when $n$ is divided by $t$, respectively.  
Set
\[
A_{n,t} = \bigg\lfloor \frac{n}{t+1} \bigg\rfloor, \quad
B_{n,a} = \bigg\lfloor \frac{n}{2a+1} \bigg\rfloor, \quad
C_{n,t} = \bigg\lceil \frac{n}{t} \bigg\rceil.
\]
Note that $A_{n,t}, B_{n,a}, C_{n,t} \ge 0$ for $n \ge 0$. Then
\[
\Delta_{P(t+1,-t,2a+1)}(n)
= A_{n,t} + B_{n,a} + 1 - C_{n,t}.
\]

We now consider several cases.

\medskip
\noindent
\textbf{Case 1.} $Q_{n,t} = 0$.  
Then $0 \le n \le t-1$, and hence $C_{n,t} \le 1$.  
Thus $\Delta_{P(t+1,-t,2a+1)}(n) \ge 1 - C_{n,t} \ge 0$.

\medskip
\noindent
\textbf{Case 2.} $Q_{n,t} = 1$, $R_{n,t} = 0$.  
Then $n = t$, so $C_{n,t} = 1$, and again $\Delta_{P(t+1,-t,2a+1)}(n) \ge 0$.

\medskip
\noindent
\textbf{Case 3.} $Q_{n,t} = 1$, $R_{n,t} > 0$.  
Then $t+1 \le n \le 2t-1$, and $(A_{n,t}, C_{n,t}) = (1, 2)$.  
Hence $\Delta_{P(t+1,-t,2a+1)}(n) \ge A_{n,t} + 1 - C_{n,t} \ge 0$.

\medskip
\noindent
\textbf{Case 4.} $2 \le Q_{n,t} \le t$.  
Since $Q_{n,t}t \le n \le (Q_{n,t}+1)t - 1$, we have
$A_{n,t} \ge Q_{n,t}-1$ and $C_{n,t} \le Q_{n,t}+1$.  
If $b>0$ and $b+1 \le a \le 2b$, then $Q_{n,t}t \ge 2t = 4b+2 > 2a+1$,  
so $B_{n,a} \ge 1$, and therefore
\[
\Delta_{P(t+1,-t,2a+1)}(n) \ge (Q_{n,t}-1) + 1 + 1 - (Q_{n,t}+1) = 0.
\]

\medskip
\noindent
\textbf{Case 5.} $Q_{n,t} \ge t+1$ and $a \ge 3$.  
From $Q_{n,t} \ge t+1$ and $2b \ge a$, we have
\begin{align*}
\Delta_{P(t+1,-t,2a+1)}(n)
&\ge \frac{n-t}{t+1} + \frac{n-2a}{2a+1} + 1 - \frac{n+t-1}{t}\\
&= \left( \frac{1}{t+1} + \frac{1}{2a+1} - \frac{1}{t} \right)n
- \frac{t}{t+1} - \frac{2a}{2a+1} + \frac{1}{t}\\
&\ge t + \frac{t(t+1)}{2a+1} - (t+1)
- \frac{t}{t+1} - \frac{2a}{2a+1} + \frac{1}{t}\\
&= \frac{(2b+1)(2b+2)}{2a+1} - 1 - \frac{2b+1}{2b+2}
- \frac{2a}{2a+1} + \frac{1}{2b+1}\\
&\ge \frac{(a+1)(a+2)}{2a+1} - 1 - \frac{2b+1}{2b+2}
- \frac{2a}{2a+1} + \frac{1}{2b+1}\\
&> \frac{a^2 + a + 2}{2a+1} - 2
= \frac{a(a-3)}{2a+1} > 0.
\end{align*}

\medskip
\noindent
\textbf{Case 6.} $Q_{n,t} \ge t+1$ and $a = 2$.  
Then $b = 1$, $t = 3$, and $Q_{n,3} \ge 4$. We obtain
\begin{align*}
\Delta_{P(t+1,-t,2a+1)}(n)
&\ge \frac{n-3}{4} + \frac{n-4}{5} + 1 - \frac{n-2}{3}\\
&\ge \frac{12}{5} - 1 - \frac{3}{4} - \frac{4}{5} + \frac{1}{3} > 0.
\end{align*}

Therefore, the $\tau$-sequence
\[
(\tau_{P(2b+2,-(2b+1),2a+1)}(n))_{n=0}^\infty
\]
of $\Sigma_2(P(2b+2,-(2b+1),2a+1))$ is non-decreasing.
\end{proof}

We now prove Theorem \ref{thm:main}. 
\begin{proof}[Proof of Theorem \ref{thm:main}]
If $b>0$ and $b+1 \le a \le 2b$, then by Lemma~\ref{lem: P(2b+2,-(2b+1),2a+1)}, the $\tau$-sequence $(\tau(n))_{n=0}^\infty$ of $\Sigma_2(P(2b+2,-(2b+1),2a+1))$ is non-decreasing. 
Therefore, the double branched cover $\Sigma_2(P(2b+2,-(2b+1),2a+1))$ is an $L$-space with respect to the unique $\mathbb{Z}_2$-invariant $\mathrm{Spin}^c$ structure $\s_0$. 
Hence,
\[
\dim_{\mathbb{F}_2}\,\widehat{HF}(\Sigma_2(P(2b+2,-(2b+1),2a+1)),\s_0)=1.
\]
Since $1/(2b+2)-1/(2b+1)+1/(2a+1)>0$ in this case, we have
\begin{align*}
q_M(P(2b+2,-(2b+1),2a+1))
&= -\frac{\sigma(P(2b+2,-(2b+1),2a+1))}{2} \\
&= \frac{-(2b+1)+(2a+1)}{2} \\
&= a-b
\end{align*}
by Theorem~\ref{thm: L with s0} and Corollary~\ref{cor: sign of P(p, q, r)}. 

Moreover, since $-(2b+1)<0$, $2a+1>0$, $-(2b+1)+2a+1=2(a-b)>0$, and $2b+2-(2b+1)>0$, Theorem~\ref{thm:s(P(p,q,r))} yields
\[
\frac{s(P(2b+2,-(2b+1),2a+1))}{2}=a-b-1.
\]
Therefore, by the contrapositive of Theorem~\ref{thm:Feller-Lewark-Lobb2024Squeezed}, the pretzel knot 
\[
P(2b+2,-(2b+1),2a+1)
\]
is not squeezed. 
\end{proof}

\begin{rem}
By \cite{Feller-Lewark-Lobb2024Squeezed}*{Proposition 1.2} and Theorem \ref{thm:main}, 
the knot $P(2b+2,-(2b+1),2a+1)$ is not quasi-alternating for any integers $a$ and $b$ satisfying $b>0$ and $b+1 \le a \le 2b$. Notice that quasi-alternatingness of Mentesions knots are completely determined in \cite{Issa2018Classification}*{Theorem 1}, so this result is not new.

Moreover, by \cite{Waite2020Three}*{Corollary 6.9}, we have
\[
\tau(P(2a,-(2b+1),2c+1)) = c - b - 1
\]
for any integers $a$, $b$, and $c$ satisfying $\min\{a, c\} > b > 0$. 

Hence, we have 
\begin{align*}
\tau(P(2b+2,-(2b+1),2a+1))
&= \frac{s(P(2b+2,-(2b+1),2a+1))}{2} \\[1mm]
&= -\frac{\sigma(P(2b+2,-(2b+1),2a+1))}{2} \\[1mm]
&= a - b - 1
\end{align*}
for any integers $a$ and $b$ satisfying $b>0$ and $b+1 \le a \le 2b$. 
Thus our determination of non-squeezedness cannot be recovered by comparering $s/2$ and $\tau$. 

From \cite{Manolescu-Ozsvath2007Khovanov}*{Theorem 2}, if $K$ is a quasi-alternating knot, then
\[
\tau(K) = \frac{s(K)}{2} = -\frac{\sigma(K)}{2}.
\]
It follows that the slice-torus invariant $q_M$ can detect infinitely many knots 
for which the converse of the above statement does not hold.
\end{rem}

Finally, we introduce a conjecture. 

\begin{ques}\label{ques:main}
If $p\ge4$ is even, $q\le-3$ is odd, $r\ge5$ is odd, and $p+q>0$, and $q+r>0$, 
then does
\[
q_M(P(p, q, r))=-\frac{\sigma(P(p, q, r))}{2}. 
\]
hold?
\end{ques}

\begin{rem}\label{rem: q_M and squeezedness}
If the answer is yes, then we have
\[
q_M(P(p, q, r))=\displaystyle\frac{q+r}{2}\neq\displaystyle\frac{q+r}{2}-1=\frac{s(P(p, q, r))}{2} 
\]
if $1/p + 1/q + 1/r > 0$ by Corollary \ref{cor: sign of P(p, q, r)} and Theorem \ref{thm:s(P(p,q,r))}, and thus $P(p, q, r)$ is not squeezed in this case. 
If the answer is yes, we also have
\[
q_M(P(p, q, r))=\displaystyle\frac{q+r}{2}-1=\frac{s(P(p, q, r))}{2} 
\]
if $1/p + 1/q + 1/r < 0$ by Corollary \ref{cor: sign of P(p, q, r)} and Theorem \ref{thm:s(P(p,q,r))}. 
\end{rem}

\bibliographystyle{amsalpha}
\bibliography{paper}

@article{Daemi-Imori-Sato-Scaduto-Taniguchi2022Instantons,
  title={Instantons, special cycles, and knot concordance},
  author={Daemi, Aliakbar and Imori, Hayato and Sato, Kouki and Scaduto, Christopher and Taniguchi, Masaki},
  journal={arXiv preprint arXiv:2209.05400},
  year={2022}
}

@article{Luwark-Lobb2016Quantum,
  title={New quantum obstructions to sliceness},
  author={Lewark, Lukas and Lobb, Andrew},
  journal={Proceedings of the London Mathematical Society},
  volume={112},
  number={1},
  pages={81--114},
  year={2016},
  publisher={Oxford University Press}
}

@article{Ghosh-Zhenkun-Wong2024Tau,
author = {Ghosh, Sudipta and Li, Zhenkun and Wong, C.‐M},
year = {2024},
pages = {},
title = {On the tau invariants in instanton and monopole Floer theories},
volume = {17},
journal = {Journal of Topology},
doi = {10.1112/topo.12346}
}

@article{Iida-Taniguchi2024Monopoles,
  title={Monopoles and transverse knots},
  author={Iida, Nobuo and Taniguchi, Masaki},
  journal={arXiv preprint arXiv:2403.15763},
  year={2024}
}

@article{Nemethi2005OzsvathSzabo,
  title={On the Ozsv{\'a}th-Szab{\'o} invariant of negative definite plumbed 3-manifolds},
  author={N{\'e}methi, Andr{\'a}s},
  journal={Geometry \& Topology},
  volume={9},
  number={2},
  pages={991--1042},
  year={2005},
  publisher={Mathematical Sciences Publishers},
  shorthand = {Nem05}
}

@article{Feller-Lewark-Lobb2024Squeezed,
  title={Squeezed knots},
  author={Feller, Peter and Lewark, Lukas and Lobb, Andrew},
  journal={Quantum Topology},
  year={2024},
  shorthand = {FLL24}
}

@article{Jabuka2010Rational,
  title={Rational Witt classes of pretzel knots},
  author={Jabuka, Stanislav},
  year={2010}
}

@article{RSuzuki2010khovanov,
  title={Khovanov homology and Rasmussen's s-invariants for pretzel knots},
  author={Suzuki, Ryohei},
  journal={Journal of Knot Theory and Its Ramifications},
  volume={19},
  number={09},
  pages={1183--1204},
  year={2010},
  publisher={World Scientific}
}

@article{Lewark2014rasmussen,
  title={Rasmussen's spectral sequences and the {$\mathfrak{sl}_n$}-concordance invariants},
  author={Lewark, Lukas},
  journal={Advances in mathematics},
  volume={260},
  pages={59--83},
  year={2014},
  publisher={Elsevier}
}

@misc{KimSano2025diagrammatic,
      title={A diagrammatic approach to the Rasmussen invariant via tangles and cobordisms}, 
      author={KeeTaek Kim and Taketo Sano},
      year={2025},
      eprint={2503.05414},
      archivePrefix={arXiv},
      primaryClass={math.GT},
      url={https://arxiv.org/abs/2503.05414}, 
}

@article{Lewark2024Quasipositivity,
   title={Quasipositivity and braid index of pretzel knots},
   volume={32},
   ISSN={1944-9992},
   url={http://dx.doi.org/10.4310/CAG.241121001754},
   DOI={10.4310/cag.241121001754},
   number={5},
   journal={Communications in Analysis and Geometry},
   publisher={International Press of Boston},
   author={Lewark, Lukas},
   year={2024},
   pages={1435–1444} }

@article{GreeneJabuka2011slice,
  title={The slice-ribbon conjecture for 3-stranded pretzel knots},
  author={Greene, Joshua and Jabuka, Stanislav},
  journal={American journal of mathematics},
  volume={133},
  number={3},
  pages={555--580},
  year={2011},
  publisher={Johns Hopkins University Press}
}

@article{Lecuona2015slice,
  title={On the slice-ribbon conjecture for pretzel knots},
  author={Lecuona, Ana G},
  journal={Algebraic \& Geometric Topology},
  volume={15},
  number={4},
  pages={2133--2173},
  year={2015},
  publisher={Mathematical Sciences Publishers}
}

@article{Goodrick1972TwoBridge,
  title={Two bridge knots are alternating knots},
  author={Goodrick, Richard},
  journal={Pacific Journal of Mathematics},
  volume={40},
  number={3},
  pages={561--564},
  year={1972},
  publisher={Mathematical Sciences Publishers}
}

@article{Ozsvath-Szabo2003knot,
  title={Knot Floer homology and the four-ball genus},
  author={Ozsv{\'a}th, Peter and Szab{\'o}, Zolt{\'a}n},
  journal={Geometry \& Topology},
  volume={7},
  number={2},
  pages={615--639},
  year={2003},
  publisher={Mathematical Sciences Publishers}
}

@article{Rasmussen2010khovanov,
  title={Khovanov homology and the slice genus},
  author={Rasmussen, Jacob},
  journal={Inventiones mathematicae},
  volume={182},
  number={2},
  pages={419--447},
  year={2010},
  publisher={Springer}
}

@article{Lisca2007Lens,
  title={Lens spaces, rational balls and the ribbon conjecture},
  author={Lisca, Paolo},
  journal={Geometry \& Topology},
  volume={11},
  number={1},
  pages={429--472},
  year={2007},
  publisher={Mathematical Sciences Publishers}
}

@article{Lisca2007Sums,
  title={Sums of lens spaces bounding rational balls},
  author={Lisca, Paolo},
  journal={Algebraic \& Geometric Topology},
  volume={7},
  number={4},
  pages={2141--2164},
  year={2007},
  publisher={Mathematical Sciences Publishers}
}

@article{Manolescu-Ozsvath2007Khovanov,
  title={On the Khovanov and knot Floer homologies of quasi-alternating links},
  author={Manolescu, Ciprian and Ozsv{\'a}th, Peter},
  journal={arXiv preprint arXiv:0708.3249},
  year={2007}
}

@phdthesis{Waite2020Three,
  title={Three-strand pretzel knots, knot Floer homology and concordance invariants},
  author={Waite, Daniel},
  year={2020},
  school={University of Glasgow}
}

@article{Karakurt-Savk2019OzsvathSzabo,
  title={Ozsvath-Szabo {$d$}-invariants of almost simple linear graphs},
  author={Karakurt, Cagri and Savk, Oguz}, 
  journal={arXiv preprint arXiv:1911.01688},
  year={2019}
}

@article{Karakurt-Savk2022Almost,
  title={Almost simple linear graphs, homology cobordism and connected Heegaard Floer homology},
  author={Karakurt, Cagri and Savk, Oguz}, 
  journal={Acta Mathematica Hungarica},
  volume={168},
  number={2},
  pages={454--489},
  year={2022},
  publisher={Springer}
}

@article{Issa2018Classification,
  title={The classification of quasi-alternating Montesinos links},
  author={Issa, Ahmad},
  journal={Proceedings of the American Mathematical Society},
  volume={146},
  number={9},
  pages={4047--4057},
  year={2018}
}

@inproceedings{Neumann-Raymond2006Seifert,
  title={Seifert manifolds, plumbing, $\mu$-invariant and orientation reversing maps},
  author={Neumann, Walter D and Raymond, Frank},
  booktitle={Algebraic and Geometric Topology: Proceedings of a Symposium held at Santa Barbara in honor of Raymond L. Wilder, July 25--29, 1977},
  pages={163--196},
  year={2006},
  organization={Springer}
}

@article{TSuzuki2023OzsvathSzabo,
  title={On the Ozsv\'ath-Szab\'o {$d$}-invariants for almost simple linear graphs},
  author={Suzuki, Tatsumasa},
  journal={arXiv preprint arXiv:2310.14279},
  year={2023}
}
\end{document}